\newtheorem{theorem}{Theorem}[section]
\newtheorem{lemma}[theorem]{Lemma}
\newtheorem{corollary}[theorem]{Corollary}
\newtheorem{proposition}[theorem]{Proposition}
\theoremstyle{definition}
\newtheorem{definition}[theorem]{Definition}
\theoremstyle{remark}
\newtheorem{remark}[theorem]{Remark}
\newcommand{\restrictedTo}[1]{\raisebox{-1pt}{\big{|}}_{#1}}
\newcommand{\cl}{\textup{cl}}
\newcommand{\rec}{r.e.c.\,}
\begin{document}

\date{\today}
\title[On Representation of the Reeb Graph]{On Representation of the Reeb Graph 
\\ as a Sub-Complex of Manifold}

\author[M. Kaluba]{Marek Kaluba}
\address{\textup{Marek Kaluba:}
Adam Mickiewicz University in Pozna{\'n}\\
Faculty of Mathematics and Computer Science\\
ul. Umultowska 87\\
61-614 Pozna{\'n}, Poland
} 
\email{kalmar@amu.edu.pl}

\author[W. Marzantowicz]{Wac{\l}aw Marzantowicz$^*$}
\address{\textup{Wac{\l}aw Marzantowicz:}
Adam Mickiewicz University in Pozna{\'n}\\
Faculty of Mathematics and Computer Science\\
ul. Umultowska 87\\
61-614 Pozna{\'n}, Poland
} 
\email{marzan@amu.edu.pl}

\thanks{$^*$Supported  by the Polish Research Grant NCN 2011/03/B/ST1/04533}

\author[N. Silva]{Nelson Silva$^\dag$}
\address{\textup{Nelson Silva:}
Adam Mickiewicz University in Pozna{\'n}\\
Faculty of Mathematics and Computer Science\\
ul. Umultowska 87\\
61-614 Pozna{\'n}, Poland
} 

\curraddr{
Institute of Mathematics and Computer Science\\
University of São Paulo\\
Avenida Trabalhador S\~{a}o-carlense, 400 - Centro\\
CEP: 13566-590, S\~{a}o Carlos, SP Brazil}
\email{nelson@icmc.usp.br}

\thanks{$^\dag$Supported  by FAPESP of Brazil Grant BEPE 2012/15659-5}

\keywords{Reeb graph, critical point, gradient flow} \subjclass[2010]{Primary 
57N65, 57R70;\; Secondary
57M50, 58K65}
\begin{abstract}
The Reeb graph  $\mathcal{R}(f) $ is one of the fundamental invariants of a 
smooth function $f\colon  M\to \mathbb{R} $ with isolated critical points. It 
is 
defined as the quotient space  $M/_{\!\sim}$ of the closed manifold $M$ by a 
relation that depends on $f$. Here we construct a $1$\nobreakdash-dimensional 
complex $\Gamma(f)$ embedded into $M$ which is homotopy equivalent to 
$\mathcal{R}(f) $. 
As a consequence we show that for every function $f$ on a manifold with finite 
fundamental group, the Reeb graph of $f$ is a tree. If $\pi_1(M)$ is 
an abelian group, or more general, a discrete amenable group, then 
$\mathcal{R}(f)$ 
contains at most one loop. Finally we prove that the number of loops in the 
Reeb graph of every function on a surface $M_g$ is estimated from above by 
$g$, the genus of $M_g$.
\end{abstract}

\maketitle

\section{Introduction}

The Reeb graph $\mathcal{R}(f) $ of a function $f\colon  M\to \mathbb{R}$  had
been defined more than 60 years ago in \cite{Reeb} and
\cite{Kronrod}, but just recently it has attracted more attention.
It plays a fundamental role in computational topology for shape
analysis (\cite{Biasotti,Cole-Mc}). Examples of applications of
Reeb graphs include manifold reconstruction, 3D object indexing, 3D
object recognition and many more. It is a very accessible invariant of the 
pair $(M,f)$  giving a simplification of the topological space $M$ and hence 
much desired from the computational point of view (cf. \cite{Dey}). On the other
hand, every graph which has no (oriented) cycles is represented as the Reeb 
graph of a $C^1$\nobreakdash-function $f\colon  M\to \mathbb{R}$ on a surface 
$M$, see 
\cite{Masumoto,Sharko}. This gives a converse relation from graphs to pairs 
$(M,f)$ and is done by two-dimensional surgery argument.

In this paper our aim is to provide a construction of the Reeb graph with a 
simplicial structure and as a subspace of $M$. We achieve it by constructing 
a finite one-dimensional complex $\Gamma(f) \subset M$ which is homotopy 
equivalent to $\mathcal{R}(f) $. The sub-complex is constructed via spaces of 
paths connecting critical points of $f$. The appropriate homotopy relation 
creates a bijective correspondence between the equivalence classes of paths and 
simplices of $\mathcal{R}(f) $.

Let us denote by $\pi\colon (M,f)\to \mathcal{R}(f) $ the canonical projection 
from 
the manifold to the Reeb graph, and let $\iota\colon\Gamma(f) \hookrightarrow 
M$ denote the inclusion. We prove that the composition  \[\pi\circ\iota\colon 
\Gamma(f)\to \Gamma(f)\cong \mathcal{R}(f) \] induces the identity on the 
fundamental group of $\mathcal{R}(f) $ and this allows us to obtain a few 
relations between fundamental groups of $M$ and $\mathcal{R}(f) $. In 
particular 
we show that if $M$ is
simply-connected, or more general $\pi_1(M)$ is finite, then
$\mathcal{R}(f) $ is a tree, i.e. a contractible finite graph.
By the same argument we prove that if $\pi_1(M)$ is abelian, or more
general if $\pi_1(M)$ does not contain  $\mathbb{F}_2$, the free group on two
generators, then $\mathcal{R}(f) $ contains at most one loop. This class of 
groups contains discrete amenable groups, thus nilpotent groups, solvable 
groups and many others (see \cite{Nowak}). 

Finally, by a direct geometrical argument, we show that for a
$C^1$\nobreakdash-fun\-ction $f\colon  M\to \mathbb{R}$ on a compact closed 
surface 
$M_g$ the number of loops of $\mathcal{R}(f) $ is less than or equal to $g$, the 
genus 
of $M$. This is a complementing result to  that of \cite{Cole-Mc} which says 
that for a Morse function $f\colon M\to \mathbb{R}$ the number of loops of 
$\mathcal{R}(f) $ is 
equal to $g$.

The paper is organised as follows.
In Section \ref{sec:Reeb-graph-bsic-props} we set notation and global 
assumptions that hold throughout the paper. We also provide classic definitions 
and results obtained before, and prove some generalisations for 
$C^1$\nobreakdash-functions. Section \ref{sec:conncted-compnts-&-pths} is 
devoted to establish basic correspondences between sets of connected 
components, 
sets of homotopy classes of paths and edges and vertices of the Reeb graph.
We join these results in Section \ref{sec:Graphs}, where we define and study 
$\Gamma(f)$. Finally, in Section \ref{sec:applications} we apply these results 
to some questions concerning Reeb graphs on classical maifolds and we estimate 
number of loops in $\mathcal{R}(f)$ for a surface $M_g$.

\section{Reeb graph and basic properties}\label{sec:Reeb-graph-bsic-props}

Throughout the paper $M$ is a smooth (i.e. $C^1$) closed connected manifold of 
dimension $n\geqslant 2$ and $f\colon M \to \mathbb{R}$ is a 
$C^1$\nobreakdash-function. 
Moreover we will assume that all critical points of $f$ are \textit{isolated}.

Since $M$ is a compact space, the set of all critical points of $f$
(denoted by $\textup{Cr}(f)\subset M$) and the set of all
critical values of $f$ (denoted by ${\textup{V}}_{cr}(f)\subset \mathbb{R}$) is 
finite.
For a given $c,c'\in \mathbb{R}$, we use the following notations:
\begin{align*}
M_c &= f^{-1}(c),& M^{(c,c')} &= \{x\in M\colon c< f(x)< c' \},\\
M^c &=\{x \in M\colon f(x)\leqslant c\},& 
M^{[c,c']} &=\{x\in M\colon
c\leqslant f(x)\leqslant c'\}. 
\end{align*}
As closed subsets, the sets $M_c,M^c,M^{[c,c']}$ are compact subspaces in 
$M$. The first set is called \textbf{the level set} of $f$. Note that
\[M =\,\bigcup_{{-\infty <c<\infty}} \, M_c.\]
For a given $c\in \mathbb{R} $ let $M^s_c$ denote a connected component of the 
level 
set $M_c$. Now we recall the classical definition of the Reeb graph (see 
\cite{Kronrod, Reeb}).

\begin{definition}\label{Reeb definition}
We say that points   $x,y\in M$ are in the Reeb relation $x \sim y$,
if $x$ and $y$ are in the same connected component $M_c^s$ of $M_c$.
The quotient space $ M/_{\!\sim}$ is called \textbf{the Reeb graph} of
$f$ and will be denoted by $\mathcal{R}(f) $.
\end{definition}
The Reeb graph is a well defined entity by the virtue of the following lemma.
\begin{lemma}[see \cite{Kronrod, Reeb, Sharko}]
\label{Reeb lemma} The quotient space $\mathcal{R}(f) $ is homeomorphic to the 
body of a finite graph. Moreover, the vertices of $\mathcal{R}(f) $ correspond
to the classes of $x\in \textup{Cr}(f)$, i.e. to connected components $M_c^s$ 
such that $M_c^s\cap \textup{Cr}(f)\neq \varnothing$. Furthermore, $f$ induces 
a function
$\widetilde{f}\colon\mathcal{R}(f)  \to \mathbb{R}$ such that 
$f=\widetilde{f}\circ\pi$, which is strictly monotonic on each edge of 
$\mathcal{R}(f) $.
\end{lemma}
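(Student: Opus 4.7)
The plan is to decompose $\mathcal{R}(f)$ using the finite set of critical values $\textup{V}_{cr}(f) = \{c_1 < c_2 < \cdots < c_k\}$. Compactness of $M$ together with the isolated-critical-point hypothesis force $\textup{Cr}(f)$ and hence $\textup{V}_{cr}(f)$ to be finite, and this is what will ultimately bound the size of the graph. Write $U_i := f^{-1}(c_i,c_{i+1})$ for the open slabs between consecutive critical values.

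First I would study the regular slabs. On $U_i$ the function $f$ is a $C^1$ submersion, so every level set $M_c$ with $c\in(c_i,c_{i+1})$ is a codimension-one closed $C^1$-submanifold of $M$; being a compact $(n-1)$-manifold without boundary, each such $M_c$ has only finitely many connected components. Using a $C^1$ gradient-like vector field $X$ (built via a $C^1$ partition of unity from the local rescaled gradients $\nabla f / \|\nabla f\|^2$) satisfying $Xf \equiv 1$ on compact sub-slabs $M^{[c_i + \varepsilon, c_{i+1} - \varepsilon]}$, the flow of $X$ yields a homeomorphism between every connected component $V$ of $U_i$ and the product $L\times(c_i,c_{i+1})$, where $L$ is a chosen component of $M_c \cap V$ for some intermediate $c$. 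In particular $U_i$ has only finitely many connected components, each of which projects in $\mathcal{R}(f)$ onto an open interval parametrised by $f$.

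Next I would assemble a one-dimensional CW complex $G$. Take its vertices to be the equivalence classes of critical points, so that vertices are in bijection with the critical components $M_c^s$ satisfying $M_c^s \cap \textup{Cr}(f) \neq \varnothing$; their finiteness is immediate from finiteness of $\textup{Cr}(f)$. For each connected component $V$ of each $U_i$, attach an arc whose two endpoints are the vertices corresponding to the critical components of $M_{c_i}$ and $M_{c_{i+1}}$ reached by $\overline{V}$; the product structure from the previous step together with compactness of $\overline{V}$ guarantees well-defined endpoints (possibly coinciding, in which case the arc forms a loop). The natural map $G \to \mathcal{R}(f)$ is a continuous bijection from a compact space to a Hausdorff space, hence a homeomorphism.

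Finally, $\widetilde{f}([x]) := f(x)$ is well-defined because the Reeb relation preserves $f$-values, continuous by the universal property of the quotient, and strictly monotonic on each edge, since the interior of the edge is identified with $(c_i,c_{i+1})$ via the product structure and $\widetilde{f}$ there reduces to the identity inclusion into $\mathbb{R}$. The main obstacle will be the gluing step: proving that $\overline{V} \cap M_{c_i}$ is contained in a single critical component of $M_{c_i}$ so that each arc really has a single endpoint at each end. Under the standard Morse assumption this is an immediate consequence of local normal forms, but under the weaker $C^1$-with-isolated-critical-points hypothesis of the paper it requires a careful local analysis at each critical point, using compactness of small sphere neighbourhoods together with the product structure away from $\textup{Cr}(f)$ to pass continuously to limits.
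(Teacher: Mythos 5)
Your route is sound in outline, and it is worth noting that the paper itself does not prove this lemma (it is quoted from Kronrod, Reeb and Sharko); the tools you propose --- the normalized gradient-like field giving a product structure $V\cong L\times(c_i,c_{i+1})$ over each regular slab, the resulting finiteness of components, and the assembly of edges from slab components --- are essentially what the paper re-derives in Section \ref{sec:conncted-compnts-&-pths} (Proposition \ref{prop:gradient_product_diffeo}, Corollary \ref{cor:finite-numbr-contcd-compnts}, Proposition \ref{prop:compIIa_is_edge}) for its own purposes. The genuine gap is in your gluing step, and it is not the one you flag. You attach each arc ``to the vertices corresponding to the critical components of $M_{c_i}$ and $M_{c_{i+1}}$ reached by $\overline{V}$'', but the component of $M_{c_i}$ reached by $\overline{V}$ need not contain a critical point: $c_i$ being a critical value only forces \emph{some} component of $M_{c_i}$ to be essential, not the one $\overline{V}$ limits onto. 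For such a $V$ there is no vertex to attach to, and the classes of the non-essential components of critical levels are then missing from the image of $G\to\mathcal{R}(f)$, so the map is not a bijection as written. The repair is to glue consecutive slab components through non-essential components of critical levels and take maximal unions as edges; this is precisely the paper's passage from type (II) to type (IIa) components in Definition \ref{def:type} and the maximality argument in Proposition \ref{prop:compIIa_is_edge}.

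The obstacle you do flag --- that $\overline{V}\cap M_{c_i}$ sits in a single component of $M_{c_i}$ --- is real but needs no local analysis at critical points. One has $\overline{V}\cap M_{c_i}=\bigcap_{\varepsilon>0}\cl\big(V\cap f^{-1}((c_i,c_i+\varepsilon))\big)$; each set in this nested family is compact (closed in the compact $M$) and connected (it is the closure of $L\times(c_i,c_i+\varepsilon)$ under your product structure), and a nested intersection of compact connected Hausdorff sets is connected, hence contained in one component of $M_{c_i}$. Finally, your appeal to ``continuous bijection from a compact space to a Hausdorff space'' silently assumes $\mathcal{R}(f)$ is Hausdorff, which is not automatic for quotients and is part of what must be proved; it follows, e.g., because the Reeb relation is closed in $M\times M$ (Hausdorff limits of the compact connected equivalence classes are again compact, connected and contained in a single level set).
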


\begin{definition}\label{essential components}
Let $c\in V_{cr}(f)\subset \mathbb{R}$ be a critical value and let $M_c^s$ be a 
connected component of $M_c$. We call $M_c^s$ an \textbf{essential component} 
of $M_c$ if $M_c^s\cap \textup{Cr}(f) \neq \varnothing$. In other words,
$M_c^s$ is essential if it contains a critical point. In the case
$M_c^s$ is denoted by $M_c^{es}$.
\end{definition}

\begin{proposition}\label{prop:connected-componets}
Suppose that $M$ is a smooth compact manifold and that $f\colon M\to 
\mathbb{R}$ is a 
$C^1$\nobreakdash-function with isolated critical points. Let $c$ be a 
critical value and let \[A=\{x_1, x_2,\dots, 
x_n\}=M^{es}_c \cap \textup{Cr}(f)\] be the set of critical points in 
$M^{es}_c$.
\begin{enumerate}
\item Every path-connected component of a level set is its connected 
component.
\item Every two different critical points $x_1,x_2$ in the closure 
of a connected component of $M^{es}_{c} \setminus A$ can be
connected by a path $\gamma\colon I \to M_c^{es}$ such that $\gamma(0)=
x_1$, $\gamma(1)=x_2$, and $\gamma$ is a homeomorphic embedding \textup{(}an 
arc\textup{)}.
\item  
There exists a closed subspace $K\subset M^{es}_c$ homeomorphic to a tree such 
that the set of vertices is equal to $A$.
\end{enumerate}
\end{proposition}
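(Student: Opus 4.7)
The plan is to establish the three assertions in order, with (1) feeding into (2) and (2) into (3). The heart of the argument is local path-connectivity of level sets; once that is in hand, the second assertion becomes a standard manifold-plus-arc construction, and the third is a spanning-tree argument.

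For (1), the aim is to show that the compact subspace $M_c\subset M$ is locally path-connected, from which path-components and connected components automatically coincide. At a regular point the implicit function theorem realises $M_c$ as a $C^1$-submanifold of codimension one, so local path-connectivity is immediate there. At an isolated critical point $p$, the idea is to produce a continuous level-preserving retraction of a small neighbourhood $M_c\cap B_\varepsilon(p)$ onto $\{p\}$. This may be built from a continuous gradient-like vector field on the punctured neighbourhood $B_\varepsilon(p)\setminus\{p\}$, where $\nabla f$ is non-zero, by projecting the radial vector field onto $\ker df$; alternatively one can appeal to a triangulation of $M$ compatible with $f$, so that $M_c$ becomes a subcomplex and local path-connectivity is automatic.

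For (2), let $C$ be a connected component of $M_c^{es}\setminus A$. Because $A$ is finite and $C$ consists entirely of regular points, $C$ is a smooth open $(n-1)$-submanifold, hence locally path-connected and therefore path-connected. For $x_1,x_2\in \overline C$, choose $y_i\in C$ close to $x_i$ and join them by a path in $C$; prepending and appending the short level-preserving segments supplied by (1) yields a continuous path $\gamma\colon I\to M_c^{es}$ from $x_1$ to $x_2$. To promote $\gamma$ to a homeomorphic embedding one invokes the classical continuum-theoretic fact that in a Hausdorff, locally path-connected space every path joining distinct endpoints contains an arc: one iteratively excises sub-loops and reparametrises.

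For (3), form the finite graph $G$ with vertex set $A$, whose edges join $x_i$ and $x_j$ whenever they lie in the closure of a common connected component of $M_c^{es}\setminus A$. Connectedness of $M_c^{es}$ forces $G$ to be connected, since any two critical points are linked by a chain of such shared components. Pick a spanning tree $T$ of $G$ and, using (2), realise each of its edges by an arc in $M_c^{es}$. Arcs coming from distinct components of $M_c^{es}\setminus A$ live in disjoint open sets and therefore meet only at common endpoints in $A$, so their union $K$ is a closed subspace of $M_c^{es}$ homeomorphic to $T$ with vertex set exactly $A$.

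The main obstacle is (1), and more precisely the step of proving local path-connectivity of $M_c$ at an isolated $C^1$-critical point: the Morse normal forms that would trivialise this in the smooth case are unavailable, so one must work with a continuous gradient-like flow preserving $f$ or pass through a compatible triangulation, both of which demand some care; once that technical point is cleared, parts (2) and (3) follow by essentially combinatorial arguments.
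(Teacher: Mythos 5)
The main gap is in your treatment of part (1) at an isolated critical point $p$: neither of your two proposed routes works. Projecting the radial vector field onto $\ker df$ does give a field tangent to the level sets, but it vanishes at every point of $M_c\cap B_\varepsilon(p)\setminus\{p\}$ where the sphere $\partial B_r(p)$ is tangent to $M_c$, so its trajectories need not move such points at all, let alone carry them to $p$; moreover, for an $f$ that is only $C^1$ this field is merely continuous, so integral curves need not be unique and no flow (hence no retraction) is defined, and nothing forces trajectories to converge to $p$. The triangulation route is also unavailable: for a general $C^1$ (even $C^\infty$) function with isolated critical points the level set through a critical point need not be a polyhedron near that point, so no triangulation of $M$ has $M_c$ as a subcomplex. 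Note also that a ``level-preserving retraction of $M_c\cap B_\varepsilon(p)$ onto $\{p\}$'' is strictly stronger than what is true or needed. The paper's argument is purely point-set topological and avoids any retraction: for $y\in M_c^{es}$ it first shows that the connected component of $M_c^{es}\setminus\{x_0\}$ containing $y$ must have $x_0$ in its closure (otherwise that component would be a nonempty proper clopen subset of the connected set $M_c^{es}$), and then joins $y$ to $x_0$ by an infinite concatenation of arcs threaded through a nested sequence of disks $D_n$ around $x_0$, each annulus $D_n\setminus D_{n+1}$ necessarily meeting that component. Some argument of this shape is what you are missing.

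Parts (2) and (3) are essentially the paper's argument, and your appeal to the classical fact that a path between distinct points in a Hausdorff space contains an arc is a clean way to obtain embeddings. However, in (3) your disjointness claim only covers arcs routed through \emph{distinct} components of $M_c^{es}\setminus A$: two spanning-tree edges sharing a vertex $x_i$ may both be realized through the same component $C$, in which case both arcs have their interiors in $C$ and may cross there. The paper handles this by demanding that the arcs emanating from each vertex meet pairwise only at that vertex, and by splicing any two crossing arcs into a single arc to contradict the inductive choice of which vertices are reached at which stage; you need an analogous step (or else extract a topological tree from the union of your finitely many arcs after the fact).
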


\begin{proof}
Since a path-connected 
component is always a connected component, one inclusion of the first statement 
is obvious. For the other inclusion note that it is enough to study essential 
components of a level set. Otherwise, the level set is a manifold, hence 
path-connected components are its connected components. 

Let $x_0\in M_c$ be a critical point and let $M_c^s = M_c^{es}$ denote its
connected component. It is enough to show that $M_c^{es}$ is locally
path-connected at each of its critical points. 

Note that if $M_c^{es}=\{x_0\}$ there is nothing to prove, so we may assume 
that $M_c^{es}\neq\{x_0\}$ and $M_c^{es}\setminus\{x_0\}$ is a
manifold locally near $x_0$. Indeed, for any $y\in M_c^{es}\setminus\{x_0\}$ 
let $U_y$ be an open set in $M$ such that
\begin{itemize}
 \item $U_y$ is diffeomorphic to an open disc in $\mathbb{R}^n$,
 \item $U_y$ does not contain any critical point of $f$,
 \item $U_y$ does not contain points of other connected component of $M_c$ 
besides $M_c^{es}$.
\end{itemize}
Since $f|_{U_y}\colon U_y\to \mathbb{R}$ is a submersion, ${U_y} \cap 
f^{-1}(c)\subset (M_c^{es}\setminus\{x_0\})\cap U_y$ is a submanifold.

Take a point $y \in  M_c^{es}\setminus\{x_0\}$ near $x_0$ and let
$\widetilde{M}_c^{es}(x_0,y)$ be a connected component of 
$M_c^{es}\setminus\{x_0\}$ containing $y$. We claim that \[x_0\in 
\cl\big(\widetilde{M}_c^{es}(x_0,y)\big),\] where $\cl(A)$ 
denotes the closure of $A$. 
Observe that $\widetilde{M}_c^{es}(x_0,y)$ is open and closed in
$M_c^{es}\setminus\{x_0\}$. Suppose that $x_0\not\in 
\cl\big(\widetilde{M}_c^{es}(x_0,y)\big)$. 
Then 
$\cl\big(\widetilde{M}_c^{es}(x_0,y)\big)=\widetilde{M}_c^{es}(x_0,y)$ 
in $M_c^{es}$, and $\widetilde{M}_c^{es}(x_0,y)$ is open in $M_c^{es}$, 
because $M_c^{es}\setminus\{x_0\}$ is open. Therefore, 
$\widetilde{M}_c^{es}(x_0,y)$ is a connected component of $M^{es} _c$ which 
does not contain $x_0$. Since $M_c^{es}$ is connected, 
$\widetilde{M}_c^{es}(x_0,y)$ has to be empty, which is a contradiction.

Consider a descending family of open disks $\{D_n\}_{n\in\mathbb{N}} \subset M$
centred at $x_0$. We can assume that $y\in \partial\,\cl(D_1)$, the boundary 
of $\cl(C_1)$. Observe that $\partial\,\cl(D_n)\cap 
\widetilde{M}_c^{es}(x_0,y)\neq 
\varnothing$, for every $n$. Otherwise we would cover a connected set 
$\widetilde{M}_c^{es}(x_0, y)$ by two open disjoint sets
\[A=\widetilde{M}_c^{es}(x_0,y)\cap D_n\quad 
\text{and}\quad B=\widetilde{M}_c^{es}(x_0,y)\setminus D_n.\]

Since $\widetilde{M}_c^{es}(x_0,y)$ is locally a manifold it is
path-connected. Set $x_1=y $. Let $x_n\in D_n \setminus D_{n+1} $ and
$x_{n+1}\in D_{n+1}\setminus D_{n+2} $ be any points. Then for every $n$ there 
is a path $\gamma_n\colon I\to \widetilde{M}_c^{es}(x_0,y)$ connecting
$x_n$ and $x_{n+1}$.

We define a path $\gamma\colon I\to \widetilde{M}_c^{es}(x_0,y)\cup 
\{x_0 \} = M_c^0$ by the formula
\[\gamma(t)= \begin{cases}
\gamma_1(t) & \quad \text{ if }\, 0\leqslant t \leqslant 
\!\!\,^1\!/_{\!2}, \\
\gamma_2(t) & \quad \text{ if }\,^1\!/_{\!2}\leqslant t \leqslant
\!\!\,^1\!/_{\!4}, 
\\ 
\vdots & \quad \quad\vdots \\
\gamma_n(t) & \quad \text{ if }\, 1-\!\,^1\!/_{\!2^n}\leqslant t\leqslant  
1-\!\,^1\!/_{\!2^{n+1}},\\
\vdots & \quad \quad\vdots \\
x_0         & \quad \text{ if } \; t=1.
\end{cases}
\]
Consequently every point $y\in M_c^{es}$ sufficiently close to $x_0$ can be 
connected to $x_0$ by a path. This shows that $M_c^{es}$ is locally 
path-connected, thus path-connected. Note also that all the paths $\gamma_n$ 
used in the construction of $\gamma$ can be taken as homeomorphic embeddings of 
an interval (i.e. arcs). Consequently path $\gamma\colon  I \to M^{es}_c $ 
$\gamma(0)=y$, $\gamma(1)=x_0$ is a homeomorphic embedding of $I$, i.e. an arc, 
as the infinite composition of such arcs.

To prove the second statement we take two different critical points 
$x_1,x_2 \in M^{es}_c$. Let $y_i$ for $i=1,2$ be two points in a 
connected component of $M^{es}_c \setminus \text {Cr}(f)$ such that $y_i$ is 
in a neighbourhood $U_i$ of $x_i$. Moreover assume that $U_{1}\cap 
U_2=\varnothing$. Choosing a sufficiently small neighbourhood $U_i$ we may 
connect $x_i$ and $y_i$ by an arc $\gamma_i$ contained in $U_i$ for $i=1,2$. 
Since  $y_1$ and $y_2$ are in the same component of $M^{es}_c 
\setminus\textup{Cr}(f)$ we can find an arc $\widetilde{\gamma} \subset  
M^{es}_c \setminus \textup{Cr}(f)$ connecting them. The composition of
$\gamma_1$, $\widetilde{\gamma}$, and $\gamma_2^{-1}$ is an arc connecting 
$x_1$ and $x_2$. To ease the proof of the third statement denote this arc 
$\gamma$ by $\gamma^1_2$.

To prove the third statement we use a procedure similar to constructing a 
spanning tree. However, a special care is needed since we desire a geometric 
embedding of the tree in $M^{es} _c$. In particular we require that edges are
realised by arcs in $M^{es}_c$ which do not intersect, except only in vertices.

Recall that $A=\{x_1, \ldots, x_n\}$ is the set of all critical points 
in $M^{es}_c$. Fix $x_1$ as the starting point. Let $A^1=\{x^1_1 ,x^1_2 
,\dots\}\subset A$ be the subset of $A$ containing points that can 
be connected with $x_1$ by arcs $\gamma^1_j$ as above such that $\gamma^1_j \cap
\gamma^1_{j^\prime} = \{x_1\}$ for $j\neq j^\prime$. Set
\[K^1 = \;\bigcup_{{x_j \in A^1}} \, \gamma^1_j.\]
Then $K^1$ is homeomorphic to a one-dimensional complex contractible to
$x_1$ (a hairy ball).

Apply consecutively the above construction to all $x^1_j$ in $A^1$ as the 
starting points. In each step connect $x^1_j$ analogously only to points in 
$A^1_j$, the set of these points in $A$ which have not appeared in the previous 
steps. We require that the joining arcs intersect only at $\{x^1_j\}$. 
Denote by $K^2_j$ such a one-dimensional complex contractible to $x_j$. Let
\[K^2 = K^1 \cup \; \bigcup_{{x_{j} \in A^1}}\, K^1_j,\]
and observe that $K^2$ is homeomorphic to a tree rooted at $x_1$.
Indeed, suppose that $K^{1} \cap K^1_j \neq\varnothing$, that is there is a 
cycle in $K^2$. This means that there are arcs $\gamma^1_i$ and $\gamma^j_k$ 
intersecting at a middle point $y$. Then \[\delta(t)=
\begin{cases}
\gamma^1_i(t) & \text{until $\gamma^1_i(t)$ reaches }y\\
\gamma^j_k(t) & \text{afterwards,}
\end{cases}
\]
is an arc connecting $x_1$ to $x^j_k$. Therefore $x^j_k$ would belong to both 
$A^1$ and $A^1_j$ which is a contradiction. 
Repeating this construction we get a compact set $K\subset M^{es}_c$ 
homeomorphic to a graph such that
\begin{itemize}
\item $\{x_1, \dots, x_n\} \subset K$,
\item all $x_i$ are vertices,
\item $K$ is contractible to $x_1$.
\end{itemize}
which proves the proposition. 
\end{proof}

\begin{corollary}
Any two different critical points $x_i,x_j\in A $ can be joined 
by an arc which is a composition of arcs as above.
\end{corollary}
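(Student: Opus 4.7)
The plan is to deduce the corollary directly from the third statement of Proposition~\ref{prop:connected-componets}, which already provides a tree $K \subset M^{es}_c$ whose vertex set equals $A$. Since a tree is path-connected, any two vertices $x_i,x_j \in A$ are joined by a unique edge-path in $K$, and this path is what we want. The only thing to verify is that this edge-path is actually an arc (a homeomorphic embedding) and that it is built from the arcs $\gamma^{k}_{l}$ produced in the proof of the proposition.

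First I would recall the construction of $K$: starting from $x_1$, the tree is built inductively by appending, at each already-reached vertex, arcs $\gamma^{k}_{l}$ to not-yet-reached critical points, with the property that distinct arcs at the same level meet only at their common endpoint, and that at each stage the added piece is attached to the rest of the tree only at its root. Consequently, $K$ carries a genuine simplicial tree structure whose $0$\nobreakdash-cells are the points of $A$ and whose $1$\nobreakdash-cells are individual arcs $\gamma^{k}_{l}$.

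Next, given $x_i,x_j \in A$, I would consider the unique reduced edge-path in $K$ from $x_i$ to $x_j$: $x_i = v_0, v_1, \ldots, v_m = x_j$, where each consecutive pair $(v_{r-1},v_r)$ is connected by one of the construction arcs, call it $\delta_r$. Existence and uniqueness of this sequence follow from $K$ being a tree. Define $\gamma$ as the concatenation $\delta_1 \cdot \delta_2 \cdots \delta_m$, parametrised on $I = [0,1]$ in the standard piecewise-linear fashion. By construction, $\gamma$ is a continuous path in $M^{es}_c$ joining $x_i$ to $x_j$ and is a composition of arcs as in the proposition.

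Finally, I would verify that $\gamma$ is a homeomorphic embedding. Each $\delta_r$ is individually an arc, and consecutive arcs meet only at their shared vertex $v_r$. Non-consecutive arcs $\delta_r$ and $\delta_s$ are disjoint: otherwise, an intersection point together with the segments joining it to $v_{r-1}$ and $v_{s}$ would yield a cycle in $K$, contradicting the tree property (this is precisely the argument given in the proof of the proposition for $K^1 \cap K^1_j = \varnothing$). Hence $\gamma$ is injective on $[0,1]$, and being a continuous injection from a compact space to a Hausdorff space, it is a homeomorphism onto its image, i.e.\ an arc. The only mildly delicate point is this disjointness of non-adjacent edges, but it is forced by the inductive set-up already used in the proposition, so no new argument is needed.
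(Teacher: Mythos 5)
Your argument is correct and is exactly the reasoning the paper leaves implicit: the corollary is stated without proof as an immediate consequence of part (3) of Proposition~\ref{prop:connected-componets}, namely that the unique reduced edge-path in the tree $K$ from $x_i$ to $x_j$ concatenates the construction arcs into an injective, hence embedded, path. Your verification that non-consecutive edges are disjoint (distinct vertices on a reduced tree path plus the requirement that arcs meet only at vertices) is the right way to make the embedding claim precise.
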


\begin{remark}\label{finite connected}
Given a (regular or critical) value $c\in \mathbb{R}$ of $f$, the
topological space $M_c$ has a finite number of path-connected
components $M_c^s$. The family $\{M_c^s\}$ forms an open cover of
$M_c$ by disjoint sets, thus it is finite.
\end{remark}

\section{Connected components and classes of 
paths}\label{sec:conncted-compnts-&-pths}

In this section we establish a correspondence between homotopy classes of paths 
and connected components of $M^{(c,c')}$. We start with well known facts about 
gradient trajectories in $M$.
Then we prove that every connected component of type (IIa) (see Definition 
\ref{def:type}) corresponds bijectively to edges of $\mathcal{R}(f)$.
Later we consider three increasing in generality families of paths: 
decreasing edge-paths, edge-paths and extended edge-paths (see 
definitions \ref{def:edge-path} and \ref{def:extended edge-path}).
On these families we introduce an equivalence relation --
homotopy \rec (see Definition \ref{def:homotopy_rec}) and prove that under 
the relation the families are the same.

At the end we establish a bijection between classes of homotopy \rec of 
paths and connected components of $M^{(c,c')}$. The composition of 
bijections above allows us to identify edges (vertices) of $\mathcal{R}(f)$ 
and homotopy classes of extended edge-paths (contractible \rec paths, 
respectively).

These results will be needed in Section \ref{sec:Graphs}.

\subsection{Gradient trajectories}
\begin{definition}\label{def:trajectory-connects}
We say that a gradient flow-line $\gamma$ \textbf{connects} a critical point 
$x'\in 
M^{es} _{c'}$ to a point $x\in M^{es}_c$ if 
\begin{itemize}
 \item $\lim \gamma (t)=x'$ as $t\to -\infty$, 
\item if $x$ is a critical point then $x=\lim \gamma(t)$ as $t\to \infty$,
\item if $x$ is a regular point there exists $t_0$ such that $\gamma(t_0)=x$.
\end{itemize}

\end{definition}

\begin{remark}\label{continuous extension}
To every trajectory as in the definition above, there corresponds
a decreasing path denoted also by $\gamma\colon I\to M$, defined as 
follows. We take a monotonic smooth diffeomorphism from the real line (a 
closed ray, respectively) to the interval $(0,1)$ ($(0,1]$, 
respectively). The composition of the diffeomorphism with the trajectory map 
has a unique continuous extension to a path $\gamma\colon [0,1] \to \subset M$ 
such that $\gamma(0)=x'$, $\gamma(1)=x$ in each of the cases above.
\end{remark}

The next proposition allows to effectively construct edges of graph
$\Gamma(f)$ (to be defined in Section \ref{sec:Graphs}) as the integral curves 
of a differential equation. For an explanation of components of type (IIa) 
see Definition \ref{def:type}.

\begin{proposition}\label{prop:joining critical levels}
Let $f: M\to \mathbb{R}$ be a $C^1$\nobreakdash-function with finite number of
critical points. Suppose that $C$ is a component of type (IIa) of $M^{(c,c')}$ 
such that $M^{es} _{c'} =M_{c'} \cap \cl(C)$.
Then every point $x_{c'} \in M_{c'}^{es} \cap \cl(C)$ can be connected with a 
point in $M_{c} ^{es}$ by the negative gradient trajectory $\gamma \subset 
\cl(C)$.
\end{proposition}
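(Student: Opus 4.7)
The plan is to construct the desired arc as a piece of an actual negative gradient flow-line. Fix a Riemannian metric on $M$ (or a suitable pseudo-gradient in case $C^1$-regularity alone gives non-unique solutions), let $\Phi_t$ denote the flow of $-\nabla f$, and recall that $f$ is non-increasing along any trajectory of $\Phi_t$ and strictly decreasing wherever the trajectory meets a regular point. I will treat the starting point $x_{c'} \in M_{c'}^{es} \cap \cl(C)$ in two cases, according to whether it is regular or critical; the construction hinges on the type (IIa) hypothesis, which I read as asserting in particular that $(c,c')$ contains no critical values of $f$ so that the open region $C$ is free of critical points.

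Suppose first that $x_{c'}$ is a regular point. Then $\gamma(t)=\Phi_t(x_{c'})$ is well defined for $t\geq 0$ and $f\circ\gamma$ is strictly decreasing until a critical point is reached. Because $C$ contains no critical points in its interior, $\gamma$ cannot accumulate in $M^{(c,c')}$, so there exists a finite $t_0 > 0$ with $f(\gamma(t_0)) = c$. For $0 < t < t_0$ the trajectory lies in $M^{(c,c')}$ and, by connectedness and continuity, must remain in the single connected component $C$, so $\gamma([0,t_0]) \subset \cl(C)$.

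Suppose next that $x_{c'}$ is a critical point, so the flow fixes it. Here I need a negative gradient flow-line $\gamma$ with $\lim_{t\to-\infty}\gamma(t)=x_{c'}$ that enters $C$. The key observation is that $x_{c'} \in \cl(C)$, so every neighbourhood of $x_{c'}$ contains points of $C$, all of which satisfy $f < c'$. Choose a decreasing sequence of neighbourhoods $U_n \downarrow \{x_{c'}\}$ with $x_{c'}$ the only critical point in each $U_n$, pick $y_n \in U_n \cap C$, and flow backward: on each $U_n$ the backward trajectory through $y_n$ either exits $U_n$ through its boundary or converges to $x_{c'}$. A standard compactness argument on the space of broken trajectories, together with the isolation of $x_{c'}$ as the unique critical point in the region, extracts a limit flow-line $\gamma$ emanating from $x_{c'}$ and lying in $\cl(C)$ for small negative times. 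Once this $\gamma$ has entered $C$ at a regular point, the analysis of the first case applies and $\gamma$ continues to descend to $M_c$ without leaving $\cl(C)$.

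To finish, one observes that the endpoint $\gamma(t_0) \in M_c \cap \cl(C)$. The symmetric content of the type (IIa) hypothesis (namely that the single connected component $C$ sits between essential components on top and on bottom, analogous to $M_{c'}^{es} = M_{c'} \cap \cl(C)$) forces this endpoint to lie in an essential component of $M_c$, namely $M_c^{es}$, completing the proof. The main obstacle is the critical case: producing a negative gradient trajectory that truly emanates from $x_{c'}$ and, crucially, does so into the prescribed connected component $C$ rather than into some neighbouring component. Handling this rigorously under only $C^1$ regularity is where care is required; one may replace $\nabla f$ by a Lipschitz pseudo-gradient agreeing with $-\nabla f$ up to positive rescaling to ensure uniqueness of trajectories.
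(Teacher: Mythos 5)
Your argument is correct and takes essentially the same route as the paper, whose entire proof is a one-line appeal to the classical gradient-flow theorem (Theorem \ref{thm:gradient flow}); you merely supply the details that the paper leaves implicit, including the genuinely delicate case of a critical initial point, where one must extract a limiting flow-line emanating from $x_{c'}$ into the prescribed component $C$. Two small imprecisions worth fixing: type (IIa) does \emph{not} assert that $(c,c')$ is free of critical values --- other components of $M^{(c,c')}$ may contain critical points; what you actually need, and what type (II) together with $f(\cl(C))\cap \text{V}_{cr}(f)=\{c,c'\}$ gives, is that $C$ itself contains none --- and in the regular case the trajectory need not reach the level $c$ at a finite time $t_0$, since it may only converge as $t\to\infty$ to a critical point of $M_c\cap\cl(C)$, a case that Definition \ref{def:trajectory-connects} is explicitly designed to accommodate. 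Finally, your last step (that the endpoint lies in $M_c^{es}$) tacitly uses that $M_c\cap\cl(C)$ is connected, which follows from Proposition \ref{prop:gradient_product_diffeo} by writing it as a nested intersection of closures of connected sets $C\cap M^{(c,d)}$, but deserves a word.
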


\begin{proof} The proposition follows directly from the classical theorem 
stated below.
\end{proof}

 \begin{theorem}[cf.\,{\cite[Chapter 1.6]{Kat}}]\label{thm:gradient flow}
 Let $M$ be a compact closed manifold equipped with a Riemannian
 structure, and   $f$ be a $C^1$\nobreakdash-function. Let $-\nabla f
 (x)\colon M \to TM$ be the negative gradient vector field defined by $f$ and 
the Riemannian structure. Then the following conditions are satisfied.
 \begin{enumerate}
 \item The field $-\nabla f$ is orthogonal to the level sets.
 \item The $\omega$-limit \textup{(}positive\textup{)} set $\omega_{-\nabla 
f}(x) $, and the 
$\alpha$-limit \textup{(}negative\textup{)} set $\alpha_{-\nabla f}(x)$ consist 
of critical points of $f$, i.e. are fixed points of the gradient flow.
 \item For any $x\in M$ and any $f$ the set $\omega_{-\nabla f}(x)$ is either a 
single point or an infinite set. The same holds for $\alpha_{-\nabla f}(x)$.
 \item If the function $f$ has only isolated critical points then every 
1
non-trivial trajectory of  $-\nabla f(x)$ converges to a critical point of $f$ 
as $t\to \pm\infty$.
 \end{enumerate}

 \end{theorem}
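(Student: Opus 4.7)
The plan is to treat the four claims in the stated order, since each builds on the Lyapunov behaviour of $f$ along the negative gradient flow. Part (1) is immediate from the defining identity of the gradient: if $v$ is tangent to a level set $f^{-1}(c)$, then $0=df(v)=\langle\nabla f,v\rangle$, so $\nabla f\perp v$.

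For part (2), the crux is the computation
\[\frac{d}{dt}f(\gamma(t))=df\bigl(-\nabla f(\gamma(t))\bigr)=-\|\nabla f(\gamma(t))\|^{2}\leqslant 0\]
along any trajectory $\gamma$, showing that $f\circ\gamma$ is non-increasing and, by compactness of $M$, convergent as $t\to\pm\infty$. Every $\omega$-limit point $p$ then satisfies $f(p)=\lim_{t\to\infty}f(\gamma(t))$; since $\omega$-limit sets are invariant under the flow, $f$ is constant along the entire orbit through $p$, and differentiating forces $\nabla f(p)=0$, i.e.\ $p\in\textup{Cr}(f)$. The $\alpha$-limit case is symmetric under time reversal.

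For part (3), I would invoke the classical fact that $\omega$-limit sets of continuous flows on compact metric spaces are non-empty, compact, and connected. A connected subset of a Hausdorff space consisting of finitely many points is a single point, so $\omega_{-\nabla f}(x)$ is either a singleton or an infinite set. Part (4) then follows directly: when the critical points are isolated, compactness of $M$ makes $\textup{Cr}(f)$ a finite discrete set, and by (2) the $\omega$-limit set lies in $\textup{Cr}(f)$, so by (3) it must be a single critical point; hence the trajectory converges as $t\to+\infty$, and the argument as $t\to-\infty$ is identical.

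The main obstacle, and the only step that is not a one-line computation, is the connectedness of $\omega$-limit sets used in part (3). I would either cite this from a standard reference or, if a self-contained argument is wanted, sketch the classical contradiction: were $\omega_{-\nabla f}(x)=A\sqcup B$ with $A,B$ non-empty closed and at positive distance, the trajectory $\gamma(t)$ would have to pass infinitely often through a compact ``separating annulus'' between neighbourhoods of $A$ and $B$, producing accumulation points there that lie in $\omega_{-\nabla f}(x)$ but in neither $A$ nor $B$.
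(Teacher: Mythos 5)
The paper gives no proof of this theorem at all --- it is quoted as a classical fact with a citation to Katok--Hasselblatt --- so the only thing to compare your argument against is the standard textbook treatment, which is exactly what you have reproduced, correctly: the Lyapunov computation $\frac{d}{dt}f(\gamma(t))=-\|\nabla f(\gamma(t))\|^2$ for (2), non-emptiness, compactness and connectedness of limit sets for (3), and finiteness of $\textup{Cr}(f)$ for (4) are all the right ingredients and fit together as you say. The one caveat worth recording is that for $f$ merely $C^1$ the field $-\nabla f$ is only continuous, so uniqueness of integral curves --- and hence the flow-invariance of $\omega_{-\nabla f}(x)$ that you invoke in (2) --- does not follow from Picard--Lindel\"of; this is a defect of the hypotheses as stated rather than of your proof (the paper itself silently assumes a well-defined flow, e.g.\ in Proposition \ref{prop:gradient_product_diffeo}), and can be repaired either by assuming $f\in C^2$ or by arguing directly that any accumulation point $p$ of $\gamma$ with $\nabla f(p)\neq 0$ would force $f\circ\gamma$ below its limit.
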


\begin{remark}Suppose that $\gamma$ connects $x'\in M^{es} _{c'} $ to $x\in 
M^{es}_c$. Even if $x'$ is a critical point, $x$ may be a regular point.
In particular, not every pair of critical points (in different level sets) are 
connected by a negative gradient trajectory (see \cite[Chapter 2.3]{Nitecki} 
for 
an example of a height function of a slightly inclined horizontal torus). 
For decreasing paths connecting critical points see Lemma 
\ref{lem:hmtpy-to-pth-crit2crit}.
\end{remark}

\subsection{Connected components.}
\begin{definition}\label{def:type}
Recall that $M^{(c,c')}$ was defined as $\{x\in M 
\colon f(x)\in (c,c')\}$.
\begin{itemize}
 \item A connected component of $M^{(c,c')}$ is called of \textbf{type (I)} if 
it
contains a critical point. Otherwise, it is called of \textbf{type (II)}.
\item Let $C$ be a component of type (II) of $M^{(c,c')}$. Suppose that 
\[f(\cl(C))\cap \text{V}_{cr}(f)= \{c,c'\},\]
i.e. $M_c^{es}\cap\cl(C)\neq \varnothing$ and $M_{c'}^{es}\cap 
\cl(C)\neq \varnothing$. Then we call $C$ a component of {\textbf{type
(IIa)}} or an {\textbf{edge-component}}.
\end{itemize}
\end{definition}

In other words, component of type (IIa) is a connected component of 
$M^{(c,c')}$ which crosses every level set $M^{(c,c')}$ and its boundary 
intersects some essential components of $M_c$ and $M_{c'}$.

\begin{proposition}\label{prop:gradient_product_diffeo}
Let $C$ be a component of type (IIa) of $M^{(c,c')}$. 
There exists a diffeomorphism 
\[\left(M_a \cap C\right)\times (c,c')\cong M^{(c,c')}\cap C.\]
\end{proposition}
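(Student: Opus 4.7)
The plan is to construct the diffeomorphism explicitly as the flow of the normalised negative gradient of $f$ restricted to $C$. Fix any Riemannian metric on $M$. Since $C$ is of type (II), the restriction $f|_C$ has no critical points, so the vector field
\[
X = -\frac{\nabla f}{\|\nabla f\|^2}
\]
is smooth and nowhere vanishing on $C$ and satisfies $X(f) \equiv -1$. Its maximal local flow $\psi_t$ therefore decreases $f$ at unit rate along every integral curve, i.e.\ $f(\psi_t(p)) = f(p) - t$ whenever $\psi_t(p)$ is defined.

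Fix any $a \in (c, c')$. Because $C$ is connected and its closure meets both $M_c$ and $M_{c'}$ (the defining feature of type (IIa)), the image $f(C)$ is the whole open interval $(c, c')$, so the slice $M_a \cap C$ is non-empty; as $f|_C$ is a submersion, it is automatically a smooth embedded hypersurface of $C$. I would then define
\[
\Phi \colon (M_a \cap C) \times (c, c') \longrightarrow M^{(c,c')} \cap C, \qquad \Phi(x, s) = \psi_{a - s}(x),
\]
with candidate inverse $\Psi(y) = \bigl(\psi_{f(y) - a}(y),\, f(y)\bigr)$. Smoothness of both maps follows from smoothness of the flow and of $f$, while the identity $f(\psi_t(p)) = f(p) - t$ gives $\Phi \circ \Psi = \mathrm{id}$ and $\Psi \circ \Phi = \mathrm{id}$ formally.

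The main obstacle is to verify that $\psi_{a - s}(x)$ actually lies in $C$ for every $(x, s) \in (M_a \cap C) \times (c, c')$, rather than escaping into a different connected component of $M^{(c, c')}$ or running off in finite time. Since $C$ is a connected component of the open set $M^{(c, c')}$, its frontier in $M$ is contained in $\cl(M^{(c, c')}) \setminus M^{(c, c')} \subset M_c \cup M_{c'}$. For $x \in M_a \cap C$ the equality $f(\psi_t(x)) = a - t$ keeps the trajectory strictly between the levels $c$ and $c'$ for all $t \in (a - c', a - c)$, and by continuity in $t$ the trajectory cannot jump to a different component of $M^{(c, c')}$ without first crossing that frontier. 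Hence $\{\psi_t(x) : t \in (a - c', a - c)\} \subset C$, and $\Phi$ is well-defined on its whole domain. A symmetric argument, flowing an arbitrary $y \in M^{(c, c')} \cap C$ back to level $a$, shows that $\Psi$ is well-defined as well, which together with the formal inverse relations completes the diffeomorphism.
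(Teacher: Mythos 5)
Your proof is correct and rests on the same basic mechanism as the paper's: flow along a rescaled negative gradient and use $f$ itself as the second coordinate. The differences are in execution, and two are worth noting. First, you rescale by $\|\nabla f\|^{-2}$, which makes $f$ decrease at exactly unit rate along trajectories; the paper instead uses the unit-length field $Y=X/\|X\|$, along which $f$ decreases at rate $\|\nabla f\|$, so its assertion that the flow starting at level $a$ reaches level $b$ at time $b-a$ really requires your normalisation rather than its own --- your choice is the one that makes the timing claim literally true. Second, the paper sidesteps the non-compactness of $C$ by constructing the diffeomorphism on an exhaustion by compact slabs $g^{-1}([a_n,b_n])$ and passing to a direct limit, whereas you work directly on the open component and therefore must justify that the maximal flow exists on all of $(a-c',a-c)$. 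You flag this (``running off in finite time'') and you have the right ingredients --- the identity $f(\psi_t(x))=a-t$ confines the trajectory up to time $t$ to $f^{-1}([a-t,a])\cap\cl(C)$, which is a compact subset of $C$ because the frontier of $C$ lies in $M_c\cup M_{c'}$ --- but you should say explicitly that you are invoking the escape lemma: a maximal integral curve whose closure stays in a compact subset of the domain of the vector field is defined for all time up to the endpoint in question. With that one sentence added your argument is complete, and arguably more self-contained than the direct-limit construction.
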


\begin{proof}
Note that $g=f\restrictedTo{C}$ is a function without critical points.
Consider the vector field $X = -\nabla g$. Since $\|X(p)\|>0$ for all
$p\in C$, we can define $Y = \frac{X}{\|X \|}$ on $C$. In
particular, it is defined on $g^{-1}([a,b])=M^{[a,b]}\cap C$ for any $a,b$ such 
that $c<a<b<c'$. Let $\phi_p(t)$ denote the integral curve of $Y$ passing 
through $p\in g^{-1}(a)$. Since \[\frac{d\,\phi_p(t)}{dt}=1,\]
the flow of $Y$ which starts from the level $a$ at time $0$, will reach
the level $b$ at time $b-a$. Therefore we obtain a diffeomorphism
\[g^{-1}(a)\times [a,b]\ni (p,t)\longmapsto h(p,t)=\phi_p(t-a)\in
g^{-1}([a,b]).\]

Now we want to extend the diffeomorphism to the whole connected component $C$.

Denote by  $D$ the connected component $M_d\cap C$ of the level set $M_d$ for 
some $d\in (c,c^\prime)$. Consider an ascending family of 
intervals $\{[a_n,b_n]\}$, such that
\[\bigcup_n [a_n,b_{n}] = (c,c') \quad \text{and} \quad d\in \bigcap_n 
[a_n,b_n].\]
Next we obtain a diffeomorphism
\[
h_n\colon D\times [a_n,b_n] \to g^{-1}([a_n,b_n]) 
\]
in a similar way as $h$ was obtained.
By the uniqueness of the integral curve passing through a point, we can define 
a diffeomorphism as the direct limit
\begin{align*}
\varinjlim_n h_n \colon D\times\bigcup_n[a_n,b_n]=D\times(c,c') & 
\longrightarrow 
\bigcup_n g^{-1}\big([a_n,b_n]\big)=C
\end{align*}
by the formula
\begin{align*}
(p,t) & \longmapsto h_n(p,t) \quad \text{for some $n$}.
\end{align*}
\end{proof}

\begin{lemma}\label{connectenes type II}
A component $C$ of type (II) of $M^{(c,c')}$ intersects exactly 
one connected component of $M_d$, for each level set $d\in(c,c')$.
\end{lemma}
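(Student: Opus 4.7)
My plan is to obtain the conclusion by producing a product decomposition $(M_d\cap C)\times(c,c')\cong C$ via essentially the same gradient-flow construction used in the proof of Proposition~\ref{prop:gradient_product_diffeo}, and then to deduce the lemma from the connectedness of $C$: a product of $M_d\cap C$ with the connected set $(c,c')$ can be connected only if $M_d\cap C$ is itself connected, whence it must lie inside a single connected component of $M_d$.

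To set this up I would first verify that $M_d\cap C\neq\varnothing$ for every $d\in(c,c')$, i.e.\ that $f(C)=(c,c')$. Because $C$ is a connected component of the open set $M^{(c,c')}$ in the locally connected manifold $M$, it is itself open, and a standard argument gives $\partial C\subset\partial M^{(c,c')}\subset M_c\cup M_{c'}$. If $\partial C$ failed to meet, say, $M_c$, then $f$ would equal $c'$ on $\partial C$ and be strictly less than $c'$ on $C$, so the continuous function $f$ on the compact set $\cl(C)$ would attain its infimum at an interior point of $C$, producing a critical point in $C$ and contradicting the type (II) hypothesis. Symmetrically $\partial C$ meets $M_{c'}$, and so $f(C)=(c,c')$.

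Next I would show that for any $a,b$ with $c<a<b<c'$ the set $C\cap M^{[a,b]}$ is compact. Any point of $\cl(C)\setminus C$ lies in $\partial C\subset M_c\cup M_{c'}$ and therefore has $f$-value $c$ or $c'$, which is disjoint from $M^{[a,b]}$; thus $C\cap M^{[a,b]}=\cl(C)\cap M^{[a,b]}$ is closed in the compact space $M$. On this compact set $\nabla f$ is bounded away from zero, so the suitably normalized negative gradient field of $f$ is smooth and its flow is complete between the levels $a$ and $b$. Replicating the construction in Proposition~\ref{prop:gradient_product_diffeo} verbatim---the argument there uses only the absence of critical points in $C$ and compactness on closed subintervals---the flow yields a diffeomorphism $(M_d\cap C)\times(a,b)\cong C\cap M^{(a,b)}$, and compatible choices for an exhausting sequence $[a_n,b_n]\nearrow(c,c')$ assemble via direct limit into a diffeomorphism $(M_d\cap C)\times(c,c')\cong C$.

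The main technical point, and really the only one, is that the construction of Proposition~\ref{prop:gradient_product_diffeo} goes through for type (II) in the generality above rather than only for type (IIa); this is precisely what the compactness of $C\cap M^{[a,b]}$ secures. With the product decomposition in hand, connectedness of $C$ forces $M_d\cap C$ to be connected, and hence contained in a single connected component of $M_d$, which proves the lemma.
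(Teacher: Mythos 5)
Your proof is correct and follows essentially the same route as the paper's: both rest on the gradient-flow product decomposition of Proposition~\ref{prop:gradient_product_diffeo}. You are in fact somewhat more careful than the paper --- you verify that $f(C)=(c,c')$, that the slabs $C\cap M^{[a,b]}$ are compact, and that the trivialization applies to a general type (II) component rather than only to type (IIa) --- whereas the paper simply trivializes $M^{[d,d']}$ and reads off a bijection of connected components.
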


\begin{proof}
Observe that $f\restrictedTo{C}\colon C\to \mathbb{R}$ does not have any 
critical 
point. Let $ 
c<d < d^\prime <c^\prime$. By the gradient flow argument (see Proposition 
\ref{prop:gradient_product_diffeo}) $M^{[d,d^\prime]}$  is homeomorphic to 
$I\times M_d $, 
 and simultaneously to $I \times M_{d ^\prime} $ which sets a bijection between 
connected components of $M^{[d,d']}$ and connected components of $M_d$ for all 
$d\in (c,c')$. Since $C$ intersects exactly one connected component of 
$M^{[d,d']}$, it intersects exactly one component of $M_d$.
\end{proof}

\begin{corollary}\label{cor:finite-numbr-contcd-compnts}
The manifold $M^{(c,c')}$ has a finite number of connected
components.
\end{corollary}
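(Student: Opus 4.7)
The plan is to partition the connected components of $M^{(c,c')}$ into those of type (I) and type (II) and bound each family separately. Since distinct components are disjoint and $\textup{Cr}(f)$ is finite, the number of type (I) components (each containing at least one critical point) is immediately finite.

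For type (II) components the strategy is to reduce to counting components of a suitably chosen regular level set. First I would pick a regular value $d\in (c,c')$; such a $d$ exists because $V_{cr}(f)$ is finite. Then $M_d$ is a closed submanifold of $M$, and by Remark \ref{finite connected} it has only finitely many connected components.

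The key step is to show that every type (II) component $C$ of $M^{(c,c')}$ meets $M_d$. Since $f\restrictedTo{C}$ has no critical points, it is an open map, so $f(C)$ is open in $(c,c')$. A short compactness argument should also show that $f(C)$ is closed in $(c,c')$: for any sequence $d_n\in f(C)$ with $d_n\to d^*\in (c,c')$, a subsequence of preimages in the compact manifold $M$ converges to some $x$ with $f(x)=d^*$, and this $x$ must lie in $C$ itself, because connected components of the open subset $M^{(c,c')}$ are closed in $M^{(c,c')}$ (so any accumulation point of $C$ lying outside $C$ necessarily has $f$-value $c$ or $c'$). Hence $f(C)=(c,c')$, and in particular $C\cap M_d\neq\varnothing$. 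By Lemma \ref{connectenes type II}, $C$ then meets exactly one connected component of $M_d$, and two different components of $M^{(c,c')}$ cannot share a component of $M_d$ (such a component is connected and contained in $M^{(c,c')}$, hence lies in a single component of $M^{(c,c')}$). This yields an injection from the set of type (II) components into the finite set of components of $M_d$, completing the count.

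The main obstacle I anticipate is the clopen-in-$(c,c')$ argument forcing $f(C)=(c,c')$ for type (II) components; the rest is routine once this is in place, combining finiteness of $\textup{Cr}(f)$, finiteness of the set of components of $M_d$, and Lemma \ref{connectenes type II}.
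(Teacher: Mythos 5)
Your proof is correct and follows essentially the same route as the paper: split the components into type (I) and type (II), bound the former by the finiteness of $\textup{Cr}(f)$, and inject the latter into the finite set of connected components of a regular level set $M_d$ via Lemma \ref{connectenes type II}. Your clopen argument showing $f(C)=(c,c')$ for a type (II) component $C$ carefully justifies a step the paper leaves implicit in the statement of that lemma, but the overall strategy is the same.
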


\begin{proof}
The manifold $M^{(c,c')}$ decomposes as a union of type (I) and type (II) 
components. The number of critical points is finite, hence we have a finite
number of components of type (I). Therefore it suffices to estimate the number 
of type (II) components.

Fix a regular value $d\in (c,c')$. Let $C$ be component of type (II). Note that 
$M_d$ has finite number of components as a compact sub-manifold of $M$. 
Since there are only finitely many of choices of $d$ which correspond to 
a different (non-canonically homeomorphic) level sets, the 
statement follows from the proof of Lemma \ref{connectenes type II}.
\end{proof}

Let us denote the interior of $J$ by $\int (J)$ .

\begin{proposition}\label{prop:compIIa_is_edge}
There is a bijection between components of type (IIa) in $M$ and edges of 
$\mathcal{R}(f) 
$.
\end{proposition}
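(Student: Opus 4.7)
The plan is to construct mutually inverse maps between the set of type~(IIa) components and the set of edges of $\mathcal{R}(f)$.

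For the forward direction, let $C$ be a component of type~(IIa) inside a slab $M^{(c,c')}$. By Lemma~\ref{connectenes type II}, $C$ meets each level set $M_d$, $d\in(c,c')$, in a single connected component, and by Proposition~\ref{prop:gradient_product_diffeo}, $C$ is diffeomorphic to the cylinder $(M_d\cap C)\times(c,c')$. Projecting to $\mathcal{R}(f)$, this cylinder collapses onto a single open arc $\pi(C)$, and since $\tilde f$ is strictly monotone on each edge (Lemma~\ref{Reeb lemma}), it restricts to a homeomorphism $\pi(C)\cong (c,c')$. By the very definition of type~(IIa), $M_c^{es}\cap \cl(C)\neq\varnothing$ and $M_{c'}^{es}\cap \cl(C)\neq\varnothing$, so $\pi(\cl(C))$ extends $\pi(C)$ by two vertex-endpoints of $\mathcal{R}(f)$. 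I declare $\Phi(C):=\pi(\cl(C))$ to be the associated closed edge.

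Injectivity is direct: two distinct type~(IIa) components either live in different slabs, in which case their $\tilde f$-ranges already differ, or they are disjoint components of the same slab $M^{(c,c')}$; in the latter case they intersect distinct connected components of every intermediate $M_d$, so their projections to $\mathcal{R}(f)$ are disjoint open arcs, yielding different edges.

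For surjectivity, start from an edge $e\subset \mathcal{R}(f)$ with vertex-endpoints at critical values $c<c'$ (both critical by Lemma~\ref{Reeb lemma}). Strict monotonicity of $\tilde f$ on $e$ forces each $d\in(c,c')$ to correspond to a single point of $\int(e)$, hence to a single connected component $M_d^s$ of $M_d$, and one has $\pi^{-1}(\int(e))=\bigcup_{d\in(c,c')}M_d^s$. Using the negative gradient trajectories supplied by Theorem~\ref{thm:gradient flow} together with Proposition~\ref{prop:joining critical levels}, any two such level components can be joined by a path inside $\pi^{-1}(\int(e))$, so this set is connected; it is also open, free of critical points (since $\int(e)$ contains no vertex of $\mathcal{R}(f)$), and contained in $M^{(c,c')}$. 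Hence it is a single connected component $C$ of $M^{(c,c')}$ of type~(II). The essential components representing the two vertex-endpoints of $e$ lie in $\cl(C)$, so $C$ satisfies condition~(IIa), and by construction $\Phi(C)=e$.

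The main obstacle is the surjectivity step, where one has to verify two points at once: that the negative gradient flow genuinely glues the fibrewise components $M_d^s$ together into one component of $M^{(c,c')}$, and that the resulting $C$ picks up no extraneous critical values in $f(\cl(C))$ beyond $\{c,c'\}$, so that it is honestly a type~(IIa) component rather than merely a type~(II) one.
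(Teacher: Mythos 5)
Your two directions match the paper's: the forward map is $C\mapsto\pi(\cl(C))$, justified by the cylinder structure of Proposition \ref{prop:gradient_product_diffeo} and the monotonicity of $\widetilde f$ on edges, and the backward direction reconstructs $C$ from $\pi^{-1}(\int(e))$. The paper organizes surjectivity slightly differently: rather than proving connectedness of $\pi^{-1}(\int(e))$ outright, it takes the maximal union $C=\bigcup_i C_i$ of type (II) pieces meeting $\pi^{-1}(\int(e))$ and shows that no fibre $\pi^{-1}(y)$, $y\in\int(e)$, can be missed, again via the product structure; the two arguments are interchangeable. One of your citations is circular, though: Proposition \ref{prop:joining critical levels} presupposes a component of type (IIa), which is exactly what you are trying to construct. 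The tool you actually need is Proposition \ref{prop:gradient_product_diffeo} (or Lemma \ref{connectenes type II}) applied on subintervals of $(c,c')$ containing no critical values, combined with an open--closed argument in the level parameter.

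The obstacle you flag at the end is real, but it cannot be ``verified'' in the form you state it. If some other branch of the graph has a vertex at a level $d_0\in(c,c')$ (say a local maximum elsewhere on $M$), then the edge $e$ passes over the critical value $d_0$, so $f(\cl(C))=[c,c']$ contains $d_0$ and the literal condition $f(\cl(C))\cap\mathrm{V}_{cr}(f)=\{c,c'\}$ of Definition \ref{def:type} fails, even though $C$ must correspond to $e$ for the proposition to hold. The condition has to be read as its parenthetical gloss, namely that $\cl(C)$ meets essential components of both $M_c$ and $M_{c'}$; under that reading your $C$ qualifies automatically, because every intermediate slice $C\cap M_d$ is a full connected component of $M_d$ containing no critical point, hence non-essential. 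The paper's own proof shares this wrinkle (its third bullet, that no point of $\pi(C)$ is mapped by $\widetilde f$ to a critical value, is only valid under the literal reading), so this is not a defect of your argument alone; but your write-up should fix the reading of the definition rather than promise a verification that can fail.
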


\begin{proof}
The easy part is to show that $C$, a component of type (IIa) of $M^{(c,c')}$ 
satisfies \[\pi(C)=\int (J)\] for some $1$\nobreakdash-simplex 
$J\in\mathcal{R}(f)$. 
We just need to observe that \[[c,c']=f(\cl(C))=\widetilde{f}\circ 
\pi(\cl(C))=\widetilde{f}(J).\]
Therefore $\pi$ maps $\cl(C)$ bijectively to a set in $\mathcal{R}(f)$ 
satisfying: 
\begin{itemize}
 \item $\widetilde{f}\circ\pi\big(\cl(C)\cap M_c\big)= c$,
 \item $\widetilde{f}\circ\pi\big(\cl(C)\cap M_c'\big)= c'$,
 \item no point in $\pi(C)$ is mapped via $\widetilde{f}$ to a 
critical value.
\end{itemize}
Since $C$ is connected, this is an edge of the Reeb graph.

Conversly, let $J$ be an edge in $\mathcal{R}(f)$ and let $x\in 
\pi^{-1}(\int(J))$. Set $d=f(x)$ and consider $M_{d}(x)$, the 
connected component of the level set $M_{d}$ to which $x$ belongs. Since $f$ 
restricted to $\pi^{-1}(\int(J))$ has no critical points, Lemma 
\ref{connectenes type II} implies that there exists a 
component $C_i$ of type (II) which contains $M_d(x)$. Define $C= 
\bigcup_i C_i$ to be the maximal (under inclusion) component satisfying these 
conditions. We claim that $\pi(C)=\int(J)$ and that $C$ is of type (IIa). 

Observe that by definition $\pi(C_i)\subset \int(J)$. Suppose that there 
exists a point $y\in \int(J)$ such that $\pi^{-1}(y)\cap C= \varnothing$. Then 
using Proposition \ref{prop:gradient_product_diffeo} we can show that 
$\pi^{-1}(y)$ is diffeomorphic to $\pi^{-1}(d)$. As $\pi^{-1}( (y,d) )$ is 
diffeomorphic to a cylinder, $C$ can be extended further to cover 
$\pi^{-1}(y)$. This contradicts the definition of $C$.

By the closed map lemma $\pi(\cl(C))=\cl(\pi(C))=J$, hence $C$ 
is of type 
(IIa).
\end{proof}

Consider a relation defined on $M$,
\[\label{relation} x\sim_{es} y \in M \quad\text{if}\quad x,y 
\in M_c^{es}, \quad\text{for some}\quad  c\in \text{V}_{cr}(f).\]
The relation induces an equivalence relation on $M$. We will denote
the quotient space by $M_{es}$. Let $\pi_{es}\colon  M\to M_{es}$ be the 
canonical projection. Define $f_{es}\colon  M_{es}\to \mathbb{R}$ by
$f_{es}\big([x]\big)=f(x)$. We have $f_{es}\circ \pi_{es}=f$ and the
following lemma is a direct consequence of the definition.

\begin{lemma}\label{Reeb of f_{es}}
The Reeb graph of $f_{es}$, denoted by $\mathcal{R}_{es}(f)$
coincides with the Reeb graph of $f$.
\end{lemma}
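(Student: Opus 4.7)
The plan is to show that the projection $\pi_{es}\colon M \to M_{es}$ descends to a canonical homeomorphism between $\mathcal{R}_{es}(f)$ and $\mathcal{R}(f)$. First I would observe that $\sim_{es}$ refines the Reeb relation $\sim$ on $M$: if $x \sim_{es} y$, then $x, y$ belong to a common essential component $M_c^{es}$, hence trivially to the same connected component of $M_c$. By the universal property of quotient spaces, the canonical map $\pi\colon M \to \mathcal{R}(f)$ factors as $\pi = q \circ \pi_{es}$ for a unique continuous surjection $q\colon M_{es} \to \mathcal{R}(f)$.

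Next I would reduce the whole statement to the following combinatorial fact: for every $c \in \mathbb{R}$, the restriction $\pi_{es}\restrictedTo{M_c}$ induces a bijection between the connected components of $M_c$ and the connected components of $f_{es}^{-1}(c)$. When $c$ is a regular value, $\pi_{es}$ is injective on $f^{-1}(c)$, so there is nothing to prove. When $c$ is a critical value, $\pi_{es}$ collapses each essential component $M_c^{es,\alpha}$ to a single point and is injective on the union of the non-essential components; since the components of $M_c$ are finite in number by Remark \ref{finite connected} and pairwise disjoint, their images are pairwise disjoint in $f_{es}^{-1}(c)$, and the collapsed points stay isolated from the images of non-essential components. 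This yields the claimed bijection.

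With the component-level bijection in hand, two points $x,y \in M$ are Reeb-equivalent for $f$ if and only if their classes $[x], [y] \in M_{es}$ are Reeb-equivalent for $f_{es}$. One direction follows because $\pi_{es}$ maps a connected component of $M_c$ into a connected subset of $f_{es}^{-1}(c)$; the other direction uses the bijection to lift the connected component of $f_{es}^{-1}(c)$ containing $[x]$ and $[y]$ back to a single connected component of $M_c$ containing both $x$ and $y$. Consequently $q$ is well defined on the further quotient $M_{es}/\!\sim_{f_{es}} = \mathcal{R}_{es}(f)$ and injective on it, producing a continuous bijection $\mathcal{R}_{es}(f) \to \mathcal{R}(f)$. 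Both spaces are compact Hausdorff (by Lemma \ref{Reeb lemma} they are bodies of finite graphs), so this bijection is automatically a homeomorphism.

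The only genuinely subtle point is the component bijection at critical values, and in particular the verification that collapsing the essential components of $M_c$ does not accidentally merge them with images of non-essential components or with each other. This is handled by unravelling the quotient topology: $\pi_{es}$ identifies only points of $M$ sharing the same critical value of $f$, so a collapsed class $[M_c^{es,\alpha}]$ possesses a neighbourhood basis coming from open neighbourhoods of $M_c^{es,\alpha}$ in $M$ that avoid every other component of $M_c$, which are disjoint in $M$ by compactness. Once this is in place, all the identifications above fall out as formalities.
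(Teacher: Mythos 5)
Your argument is correct and is essentially the argument the paper has in mind: the paper offers no written proof, asserting the lemma is a direct consequence of the definition, and your write-up simply supplies the details (that each connected component of $M_c$ is saturated for $\sim_{es}$, so $\pi_{es}\restrictedTo{M_c}$ induces a bijection on components, whence the two Reeb relations on $M$ coincide). One cosmetic point: to close the compact-Hausdorff argument you should get compactness of $\mathcal{R}_{es}(f)$ from its being a quotient of the compact $M$ (and Hausdorffness of $\mathcal{R}(f)$ from Lemma \ref{Reeb lemma}), rather than citing Lemma \ref{Reeb lemma} for $\mathcal{R}_{es}(f)$ itself, since $f_{es}$ is not a function on a manifold and applying that lemma to it before the identification is established would be circular.
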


Let $S(D)=D\times[-1,1]/_{\!\sim} $ denote the suspension of $D$. Define a map
\[\varphi \colon C\to S(D)\,\,\text{ by }\,\, p\longmapsto 
[h^{-1}(p)],\]
where $h$ is a diffeomorphism from Proposition 
\ref{prop:gradient_product_diffeo}.
 We can continuously extend the map to a map $\overline{\varphi}\colon 
\pi_{es} (\cl(C))\to S(D)$ by
\[\overline{\varphi}(p)=
\begin{cases}
\varphi(p),& \text{if } p\in C,\\
[x,1],& \text{if } p\in M_{c'},\\
[x,-1],& \text{if } p\in M_{c}.
\end{cases}
\]

Since $\overline{\varphi}$ is continuous, bijective and its domain
and image are compact and Hausdorff, $\overline{\varphi}$ is a homeomorphism.
The proof of the following proposition is a straightforward consequence of 
Proposition \ref{prop:gradient_product_diffeo}.

\begin{proposition}\label{suspension} Let $C$ be a component $C$ of type (IIa).
The function \[\overline{\varphi}\colon \pi_{es} (\cl(C))\to S(D)\]
is a homeomorphism.
\end{proposition}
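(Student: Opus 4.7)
The proposition states that the product-structure diffeomorphism of Proposition \ref{prop:gradient_product_diffeo} extends across the two essential level components bounding $C$ to give the honest suspension of the fibre $D=M_d\cap C$. My plan is to verify well-definedness, bijectivity, and continuity of $\overline{\varphi}$, then close with the standard compact-to-Hausdorff criterion.

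Well-definedness and bijectivity follow directly from the construction. The only non-trivial $\pi_{es}$-equivalence classes meeting $\cl(C)$ are $M_c^{es}\cap\cl(C)$ and $M_{c'}^{es}\cap\cl(C)$, and both are sent to a single cone point of $S(D)$, so the formula descends consistently through $\pi_{es}$. On the open part $C$ the map is $q\circ h^{-1}$, where $q\colon D\times(c,c')\hookrightarrow S(D)$ is the inclusion into the non-collapsed strip of the suspension; since $h$ is a diffeomorphism and $q$ is injective there, $\overline{\varphi}$ restricts to a bijection between $\pi_{es}(C)=C$ and $S(D)\setminus\{[x,-1],[x,1]\}$, while the two cone points are hit by exactly the two collapsed classes.

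Continuity is the main technical point. On $\pi_{es}(C)=C$ it is immediate from continuity of $h^{-1}$ and of the suspension quotient. At a cone point, say the one coming from the $c$-end, a basic neighborhood in $S(D)$ has the form $q\big(D\times[c,c+\delta)\big)$; the preimage under the composition $\overline{\varphi}\circ\pi_{es}\restrictedTo{\cl(C)}\colon\cl(C)\to S(D)$ equals
\[f^{-1}\big([c,c+\delta)\big)\cap\cl(C)=f^{-1}\big((-\infty,c+\delta)\big)\cap\cl(C),\]
which is open in $\cl(C)$ as the intersection with an open subset of $M$. Because $\cl(C)$ is compact Hausdorff, the restriction $\pi_{es}\restrictedTo{\cl(C)}\colon\cl(C)\to\pi_{es}(\cl(C))$ is a quotient map, so the preimage under $\overline{\varphi}$ alone is open; the cone point at $c'$ is symmetric.

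Finally, $\pi_{es}(\cl(C))$ is compact as the continuous image of $\cl(C)$, and $S(D)$ is Hausdorff as the suspension of the compact Hausdorff space $D$, which is closed in the compact level set $M_d$ by Lemma \ref{connectenes type II}. A continuous bijection from a compact space to a Hausdorff space is a homeomorphism, so $\overline{\varphi}$ is the desired homeomorphism.
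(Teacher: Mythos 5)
Your proof is correct and follows essentially the same route as the paper, which likewise extends the product diffeomorphism of Proposition \ref{prop:gradient_product_diffeo} over the two cone points and concludes by the continuous-bijection-from-compact-to-Hausdorff criterion. You supply the details (the tube-lemma description of neighbourhoods of the cone points via $f^{-1}([c,c+\delta))$, and the fact that $\pi_{es}\restrictedTo{\cl(C)}$ is a closed, hence quotient, map) that the paper leaves as ``a straightforward consequence.''
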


\begin{remark}\label{simply-connected}
Since $D$ is path-connected, $S(D)$ is simply-connected. Given a point $p\in 
D$ which is not in $\alpha$\nobreakdash- or $\omega$\nobreakdash-set of a 
critical point, an embedded interval $\{p\}\times [a,b]$ is mapped to 
$\{p\}\times [-1,1]$ in $S(D)$, see Proposition \ref{thm:gradient flow}. 
Contracting the interval yields $\big(\Sigma (D),[(y,0)]\big)$, the reduced 
suspension 
of $D$ which is a well pointed simply-connected space.
\end{remark}

\begin{remark} It is worth of pointing out that our Proposition 
\ref{suspension} 
provides a proof of the following theorem of Reeb.

\begin{quotation}
\textbf{Theorem.} \textit{Let $M$ be a smooth manifold. Suppose that there 
exists a smooth function $f\colon  M\to \mathbb{R}$ with exactly two 
non-degenerate critical points. Then $M$ is homeomorphic to the sphere $S^m$.}
\end{quotation}

Indeed, in the situation the Reeb graph $\mathcal{R}(f) $ consists of just one 
edge 
connecting two vertices corresponding to the maximum and minimum (see also 
Lemma \ref{local maximum}). Consequently $M=\pi_{es} (\cl(C))$ is homeomorphic 
to the suspension of a level set $D$. Since critical points of $f$ are 
non-degenrated, it follows that $D$ is homeomorphic to a sphere $S^{m-1}$. The 
result follows. Note that without the assumption on non-degeneratedness, we 
cannot assume that $D$ is homeomorphic to a sphere. For example, degenerated 
maximum may have neighbourhoods homeomorphic to $C\Sigma$, the Brieskorn 
variaty (i.e. the cone on Brieskorn spheres).

In general, since $D$ and $S(D)$ are manifolds, it follows from the Poincare 
duality that $D$ is a $(m-1)$\nobreakdash-homology sphere. Thus the suspension 
of $D$ is simply-connected homology sphere hence by Poincare conjecture it is 
homeomorphic to $S^{m}$.
\end{remark}

\subsection{Families of paths}
\begin{definition}\label{def:homotopy_rec}
Let $c,c'$ be two critical values of $f$ and
let $\gamma_1,\gamma_2$ be two paths, such that
\begin{align*}
f(\gamma_1(0))= f(\gamma_2(0))&=c' \quad \text{(that is }\gamma_1
(0), \gamma_2(0)\in
M_{c'}^{es}\text{), and }\\
f(\gamma_1(1))= f(\gamma_2(1))&=c \,\;\!\quad \text{(that is }\gamma_1
(1), \gamma_2(1)\in M_c^{es}\text{).}
\end{align*}

\begin{itemize}
\item  We say that a map $H\colon I\times I \to M$ is a homotopy 
\textbf{relative to essential components (\rec\!)} between $\gamma_1$ and 
$\gamma_2$ when:
\begin{enumerate}
\item $H$ is a homotopy between $\gamma_1$ and $\gamma_2$.
\item  $H(0,s)\in M_{c'}^{es} $ and  $H(1,s)\in M_c^{es}$, for all $s\in I$.
\end{enumerate}

\item A \textbf{vertex-path} is a path $\varepsilon\colon I\to M$ contained in 
an essential component of a critical level.

\item  We say that a path $\gamma$ is \textbf{contractible \rec}\! if there 
exists a homotopy \rec from $\gamma$ to a vertex-path.
\end{itemize}
\end{definition}

Since level sets are path-connected, contraction \rec is equivalent to the 
existence of a homotopy \rec from $\gamma$ to a constant path
$\sigma(t)=p\in \textup{Cr}(f)$, for some critical point $p\in M^{es} _c$. Note 
that we can choose $p$ arbitrarily from the connected component of $M_c^{es}$ 
to which $\gamma(0)$ belongs.

\begin{definition}\label{def:edge-path}
Let $C$ be a component of type (IIa) of $M^{(c,c')}$. An
\textbf{edge-path of $C$} is a path $\gamma$ such that
\begin{itemize}
 \item $\gamma(0)\in M_{c'}^{es}$ and $\gamma(1)\in M_c^{es}$,
 \item $\gamma((0,1))\subset C$.
\end{itemize}
We say that an edge-path is \textbf{decreasing} if
\[f(\gamma(t)) < f(\gamma(t')) \text{ for } t>t'.\]
\end{definition}

\begin{lemma}\label{lem:homotopy lifting from SD to clC}
Let $C$ be a component of type (IIa). Suppose that two edge-paths $\gamma_0$ 
and 
$\gamma_1$ are homotopic via homotopy $H$ fixing the endpoints in $\pi_{es} 
(\cl(C))\cong S(D)$. Then they are homotopic \rec in $\cl(C)$.
\end{lemma}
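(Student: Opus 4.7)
The plan is to use Proposition \ref{suspension} to replace $\pi_{es}(\cl(C))$ by $S(D)$, then lift the homotopy $H$ to a homotopy \rec in $\cl(C)$. Denote the two cone points of $S(D)$ by $v_\pm$; these are the images $\pi_{es}(M^{es}_{c'}\cap\cl(C))$ and $\pi_{es}(M^{es}_c\cap\cl(C))$. The crucial observation is that $\pi_{es}\restrictedTo{C}\colon C\to S(D)\setminus\{v_+,v_-\}$ is a homeomorphism, so any map into $S(D)\setminus\{v_+,v_-\}$ lifts uniquely and continuously to $C$; all the delicacy concerns the preimages of $v_\pm$.

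First, I would normalize $H$ within $S(D)$, without altering its rel-endpoint homotopy class, to a map $H'$ satisfying $(H')^{-1}(v_+)=\{0\}\times I$ and $(H')^{-1}(v_-)=\{1\}\times I$. The boundary paths $\pi_{es}\circ\gamma_i$ already avoid $v_\pm$ on the open interval (as $\gamma_i$ are edge-paths), and $S(D)$ is simply connected by Remark \ref{simply-connected}. Exploiting the cone structure of $S(D)$, a compact region $K\subset I\times I$ sent by $H$ to $v_+$ can be eliminated by lifting $H$ on a neighborhood of $K$ into the open cone neighborhood of $v_+$ and then deforming it off $v_+$ everywhere except where the neighborhood meets $\{0\}\times I$; iterating for $v_-$ produces $H'$. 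This normalization is the main technical step: carrying it out rigorously without disturbing the prescribed boundary paths (or only in a correctable way) demands a careful point-set deformation argument.

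With $H'$ in hand, on the open rectangle $(0,1)\times I$ I set $\widetilde{H}:=(\pi_{es}\restrictedTo{C})^{-1}\circ H'$, which is a continuous lift into $C$. To extend $\widetilde{H}$ over the vertical sides, I use the product structure $C\cong D\times(c,c')$ from Proposition \ref{prop:gradient_product_diffeo}: writing $\widetilde{H}(r,s)=h(p(r,s),\tau(r,s))$, by construction $\tau(r,s)\to c'$ as $r\to 0^+$ and $\tau(r,s)\to c$ as $r\to 1^-$. Compactness of $\cl(C)$ together with the gradient-flow convergence of Theorem \ref{thm:gradient flow} yields continuous limits lying in $M^{es}_{c'}\cap\cl(C)$ and $M^{es}_c\cap\cl(C)$ respectively, producing a continuous map $\widetilde{H}\colon I\times I\to\cl(C)$ which is a homotopy \rec between $\gamma_0$ and $\gamma_1$, up to a final short \rec-homotopy within the path-connected essential components (Proposition \ref{prop:connected-componets}(1)) to reconcile any discrepancy in endpoints.
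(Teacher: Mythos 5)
Your proposal follows the same route as the paper's own proof: identify $\pi_{es}(\cl(C))$ with $S(D)$ via Proposition~\ref{suspension}, lift $H$ over the open part of the square using the fact that $\pi_{es}\restrictedTo{C}$ is a homeomorphism onto the complement of the two cone points, and then close the homotopy up along the vertical sides $\{0\}\times I$ and $\{1\}\times I$ by taking limits into $M^{es}_{c'}\cap\cl(C)$ and $M^{es}_{c}\cap\cl(C)$ (the paper's paths $\phi_{c'}$ and $\phi_c$ are exactly your boundary extensions). Your normalization step is a genuine addition: the paper tacitly assumes that $H$ meets the cone points $v_\pm$ only along the vertical edges of the square, which is not part of the hypothesis, and you are right that without some such reduction the pointwise lift is not even defined on all of $(0,1)\times I$. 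You only sketch that reduction, but the idea of pushing $H$ off $v_\pm$ using the cone structure is reasonable.

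The step that deserves more scrutiny --- in your write-up and equally in the paper's --- is the extension over the vertical sides. Theorem~\ref{thm:gradient flow} controls the convergence of individual gradient trajectories; it says nothing about the two-parameter family $\widetilde H$. Writing $\widetilde H(r,s)=h(p(r,s),\tau(r,s))$ with $h$ as in Proposition~\ref{prop:gradient_product_diffeo}, you do know $\tau(r,s)\to c'$ as $r\to 0^+$, but the $D$-coordinate $p(r,s)$ is an arbitrary continuous function of $(r,s)$ and need not converge as $r\to 0^+$; compactness of $\cl(C)$ only yields accumulation points, not a limit, so $\lim_{r\to 0^+}\widetilde H(r,s)$ may fail to exist. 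Even where the pointwise limits exist, their continuity in $s$ is a separate issue: the map sending $p\in D$ to the endpoint in $M^{es}_{c'}$ of the upward flow line through $p$ is precisely the kind of map that can be discontinuous near stable manifolds of critical points. The paper's proof makes the identical unproved assertion (that $x_s=\lim_{t\to 0}H(t,s)$ exists and that $\phi_{c'}$ is continuous), so you have reproduced its argument faithfully, weakest point included; but your citation of Theorem~\ref{thm:gradient flow} does not actually close that gap, and a complete proof would have to either establish this boundary regularity or replace the exact lift of $H$ by a homotopy constructed more directly in $\cl(C)$.
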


\begin{proof}
Lifting of the homotopy in $\pi_{es} (\cl(C))$ to $\cl(C)$ amounts to finding 
appropriate path in $\cl(C)\setminus C$.

For every step $s$ of $H(t,s)$, the homotopy from $\gamma_0(t)$ 
to $\gamma_1(t)$, let $x_s \in M^{es} _{c'} $ denote the 
limit \[x_s=\lim_{t\to 0} H(t,s)\] taken in $\cl(C)$. This is a valid 
definition 
since $C$ is diffeomorphic to $S(D)$ with the top point and the bottom point 
removed.
By continuity of $H$, $\phi_{c'} (s)=x_s$ is a path in $M^{es} 
_{c'}$. The homotopy \rec in $\cl(C)$ from $\gamma_0$ to $\gamma_1$ can be 
obtained by pre-stacking $H\restrictedTo{C}$ with $\phi_{c'}$ and 
post-stacking with 
(similarly defined) $\phi_c$.
\end{proof}

\begin{lemma}\label{lem:edge-path to decreasing}
Let $\gamma$ be an edge-path of $C$. Then $\gamma$ is homotopic \rec to a 
decreasing edge-path.
\end{lemma}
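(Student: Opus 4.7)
The plan is to exhibit a canonical decreasing edge-path $\gamma'$ obtained from the negative gradient flow, and then to show that $\gamma$ is homotopic \rec to $\gamma'$ by working in the suspension quotient $\pi_{es}(\cl(C))\cong S(D)$, which is simply-connected.

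First, I would invoke Proposition \ref{prop:joining critical levels}: for any chosen point $x_{c'}\in M_{c'}^{es}\cap\cl(C)$, the negative gradient trajectory through $x_{c'}$ stays in $\cl(C)$ and reaches a critical point in $M_c^{es}$. Reparametrising as in Remark \ref{continuous extension} yields a path $\gamma'\colon I\to \cl(C)$ with $\gamma'(0)\in M_{c'}^{es}$, $\gamma'(1)\in M_c^{es}$, and $\gamma'((0,1))\subset C$. Since $\gamma'$ follows $-\nabla f$, the map $t\mapsto f(\gamma'(t))$ is strictly decreasing, so $\gamma'$ is a decreasing edge-path of $C$.

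Next I would push both $\gamma$ and $\gamma'$ forward by $\pi_{es}$ into $\pi_{es}(\cl(C))\cong S(D)$ (Proposition \ref{suspension}). The relation $\sim_{es}$ collapses $M_{c'}^{es}\cap\cl(C)$ and $M_c^{es}\cap\cl(C)$ to single points, namely the two cone vertices of the suspension under the homeomorphism $\overline{\varphi}$. Hence $\pi_{es}\circ\gamma$ and $\pi_{es}\circ\gamma'$ both run from the top vertex to the bottom vertex of $S(D)$ and share endpoints there. By Remark \ref{simply-connected}, $S(D)$ is simply-connected, so the two projected paths are homotopic relative to their endpoints in $S(D)$. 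Applying Lemma \ref{lem:homotopy lifting from SD to clC}, this homotopy lifts to a homotopy \rec between $\gamma$ and $\gamma'$ inside $\cl(C)$, which is exactly the claim.

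No step is genuinely hard, since the earlier suspension description and the lifting lemma do all the real work. The one bookkeeping point deserving explicit verification is that $\gamma'$ is honestly an edge-path of $C$, i.e.\ that $\gamma'((0,1))\subset C$. This follows because a non-trivial negative gradient trajectory joining two critical points passes only through regular points in between, and such regular points, lying in $\cl(C)$ with $f$-value strictly between $c$ and $c'$, must be contained in $C$ itself.
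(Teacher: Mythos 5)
Your proof is correct, and its skeleton is the same as the paper's: both arguments push everything down to $\pi_{es}(\cl(C))\cong S(D)$, produce a homotopy rel endpoints there, and pull it back with Lemma \ref{lem:homotopy lifting from SD to clC}. Where you differ is in the two inputs to that lemma. For the target decreasing path you take an actual negative gradient trajectory supplied by Proposition \ref{prop:joining critical levels}, whereas the paper uses a longitudinal segment $\{y\}\times[-1,1]$ of the product structure from Proposition \ref{prop:gradient_product_diffeo}; these are essentially interchangeable, and your endpoint check (that $\gamma'((0,1))\subset C$ because $\cl(C)\cap M^{(c,c')}=C$) is the right one. For the homotopy in $S(D)$ you simply invoke the standard fact that in a simply-connected space any two paths with common endpoints are homotopic rel endpoints; the paper instead builds the homotopy by hand, passing to the reduced suspension $\Sigma(D)$, lifting a contraction through the quotient map via the homotopy lifting property, and then pre- and post-stacking with paths in the fibre $\{y\}\times[-1,1]$ to pin the endpoints. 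Your abstract argument buys brevity and sidesteps the somewhat delicate fibration claim; the paper's construction buys an explicit homotopy landing on a specific longitudinal path, which it reuses in Lemma \ref{lem:hmtpy-to-pth-crit2crit}. One small inaccuracy: Proposition \ref{prop:joining critical levels} only guarantees that the trajectory terminates at a point of $M_c^{es}$, not necessarily at a critical point --- but nothing in your argument requires the endpoint to be critical, so this is harmless.
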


\begin{proof}
We will find a homotopy (fixing the endpoints) in $S(D)$ from $\pi_{es} 
(\gamma)$ to a linear parametrisation of an interval connecting the top vertex 
and the bottom vertex. Then by Lemma \ref{lem:homotopy lifting from SD to clC} 
the proof is finished.

Let $\gamma$ be an edge-path of $C$ and set $D=M_d\cap C$, a connected 
component of a level set for some $d\in (c,c')$. Since $C$ is of type 
(IIa), $C$ is diffeomorphic (by Proposition \ref{prop:gradient_product_diffeo}) 
to $D\times(-1,1)$. 

Choose a point $y\in D$ and a consider $\Pi\colon S(D)\to 
\big(\Sigma(D),[(y,0)]\big)$, 
the contraction of the interval $\{y\}\times[-1,1]\subset S(D)$.
Observe that $\big(\Sigma(D),[(y,0)]\big)$ is the reduced suspension of $D$ 
which is a well pointed space. Under these contractions an edge-path $\gamma$ 
becomes $\Pi(\pi_{es} (\gamma))$, a loop based at $[(y,0)]$. Since $D$ is 
connected, $\big(\Sigma(D),[(y,0)]\big)$ is simply-connected, hence 
$\Pi(\pi_{es} (\gamma))$is contractible to $[(y,0)]$. 

To lift the contracting homotopy to $S(D)$ note that \[\Pi\colon S(D)\to 
\big(\Sigma(D),[(y,0)]\big)\] is a fibration such that the fibre has homotopy 
type of 
point (is a point or an interval to be precise). Since we can always lift the 
constant map $H(1,t)\equiv [(y,0)]$, the homotopy lifting property provides a 
homotopy $H\colon I^{2}\to S(D)$ such that 
\[\begin{cases}
H(t,0)&=\pi_{es} (\gamma(t)), \\ H(t,1)&\in \{y\}\times[-1,1], 
\end{cases} \quad \text{and}\quad
\begin{cases}
H(0,s)&\in \{y\}\times[-1,1],\\
H(1,s)&\in \{y\}\times[-1,1].
\end{cases}
\]
Note that $\{y\}\times[-1,1]$ is a fibre over point $[(y,0)]$. We want to 
modify $H$ to a homotopy $\widetilde{H}$ from $\pi_{es} (\gamma)$ to 
a linear path 
\[\alpha(t)=(y,-1\cdot t + 1\cdot(1-t))\]
starting at $(y,1)$ and ending at $(y,-1)$.

Observe that $H(0,s)=(y,r_0(s))$ and $H(1,s)=(y,r_1(s))$ for some
functions $r_i \colon [0,1]\to [-1,1]$. Let $h^r_1\colon [0,1]\to
\{y\}\times [-1,1]$ be the path from $(y,1)$ to $(y,r)$ given by
\[h^r_1(t)=(y,(1-t)1+t r_0).\]
Then $h_1^{r_0(s)}$ is a path which begins at $(y,1)$ and ends at 
$(y,r_0(s))=H(0,s)$. For each $s\in I$ pre-stack $H$ with $h_1^{r_0(s)}$. 
We remark that the collection of $\{h_0^{r(s)}\}_s$ forms a linear homotopy 
from the interval $\{y\}\times [r_0(0)=1,1]$ to $\{y\}\times[r_0(1),1]$, which 
keeps the initial point $(y,1)$ fixed. The same construction can be applied to 
obtain $h_{-1}^{r_1(s)}$, a path from $(y,r_1(0)=-1)$ to $(y,r_1(1))$. 
Post-stack $H$ with the whole homotopy given by 
$\big\{\big(h_{-1}^{r_1(s)}\big)^{-1}\big\}_{s}$ (this is the reversed 
homotopy).

Observe that for a single $s$, the pre- and post-stacked 
$H\restrictedTo{\{s\}}$ becomes a 
path from $h_1^{r_0(s)}(0)=(y,1)$ to $h_{-1}^{r_1(s)}(1)=(y,-1)$. Since we are 
stacking homotopies, the whole family of pre- and post-composed paths is 
continuous in $s$, and therefore is a homotopy 
\[\widetilde{H}\colon I^2 \to S(D),\]
from $\pi_{es} (\gamma)$ to $\{y\}\times[-1,1]$ which keeps the endpoints 
fixed.
\end{proof}

\begin{corollary}\label{cor:every-epaths-are-htpic}
Every two edge-paths of $C$ are homotopic \rec.
\end{corollary}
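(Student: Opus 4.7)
The plan is to reduce the statement to a routine homotopy argument in the suspension $S(D) \cong \pi_{es}(\cl(C))$ from Proposition \ref{suspension}, using the lifting Lemma \ref{lem:homotopy lifting from SD to clC} to transport the conclusion back to $\cl(C)$.

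Concretely, given two edge-paths $\gamma_1,\gamma_2$ of $C$, I first observe that under $\pi_{es}$ both images are paths in $S(D)$ whose endpoints are precisely the two suspension vertices $v_+ = [x,1]$ and $v_- = [x,-1]$, because $\pi_{es}$ collapses each of $M^{es}_{c'}\cap\cl(C)$ and $M^{es}_c\cap\cl(C)$ to a single point. So $\pi_{es}(\gamma_1)$ and $\pi_{es}(\gamma_2)$ are two paths in $S(D)$ sharing the same endpoints $v_+, v_-$.

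Next I would reuse exactly the construction in the proof of Lemma \ref{lem:edge-path to decreasing}: for any chosen $y \in D$, that argument produces an endpoint-fixing homotopy in $S(D)$ between $\pi_{es}(\gamma_i)$ and the standard vertical interval $\alpha(t)=(y,1-2t)$. Applying this to $i=1$ and $i=2$ and concatenating, I obtain an endpoint-fixing homotopy $H\colon I\times I\to S(D)$ from $\pi_{es}(\gamma_1)$ to $\pi_{es}(\gamma_2)$. Finally, Lemma \ref{lem:homotopy lifting from SD to clC} lifts this $H$ to a homotopy \rec between $\gamma_1$ and $\gamma_2$ in $\cl(C)$, which is what is required.

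The only subtle point I would need to be careful about is the endpoint condition when invoking Lemma \ref{lem:homotopy lifting from SD to clC}: that lemma is stated for edge-paths whose projections agree on endpoints, which is automatic here since both $\pi_{es}(\gamma_i)$ start at $v_+$ and end at $v_-$. I do not expect any genuine obstacle — the heavy lifting has already been done by Proposition \ref{suspension} (identifying $\pi_{es}(\cl(C))$ with a suspension), Lemma \ref{lem:edge-path to decreasing} (contractibility of projected edge-paths in $S(D)$), and Lemma \ref{lem:homotopy lifting from SD to clC} (lifting back to $\cl(C)$). The corollary is essentially the composition of these three results.
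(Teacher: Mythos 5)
Your proof is correct and follows essentially the same route as the paper: reduce both edge-paths to the vertical interval in $S(D)$ via the construction of Lemma \ref{lem:edge-path to decreasing} and lift the resulting endpoint-fixing homotopy back to $\cl(C)$ with Lemma \ref{lem:homotopy lifting from SD to clC}. The only (harmless) difference is that you use the same point $y\in D$ for both paths, whereas the paper allows different vertical intervals and connects them by the `linear' homotopy from Lemma \ref{lem:hmtpy-to-pth-crit2crit}.
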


\begin{proof}
By the above lemma, such edge-paths are homotopic to a decreasing 
edge-path given by an interval $[(y,t)]$ in $S(D)$. Since every such intervals 
are homotopic \rec by a `linear' homotopy (see proof of Lemma 
\ref{lem:hmtpy-to-pth-crit2crit}), the corollary follows by application of 
Lemma \ref{lem:homotopy lifting from SD to clC}.
\end{proof}

\begin{lemma} \label{lem:hmtpy-to-pth-crit2crit}
Every decreasing edge-path of $C$ is homotopic \rec to
a decreasing edge-path of $C$ connecting two critical points.
\end{lemma}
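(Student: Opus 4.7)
The strategy is to produce a specific decreasing edge-path $\gamma'$ of $C$ whose endpoints are critical points and then to apply Corollary~\ref{cor:every-epaths-are-htpic}, which tells us that $\gamma$ and $\gamma'$ must be homotopic \rec. The bulk of the work goes into the construction of $\gamma'$.

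First I would establish that the set $V \cap \cl(C)$ contains at least one critical point, where $V = M_{c'}^{es}$ denotes the essential component meeting $\cl(C)$ from above, and similarly that $W \cap \cl(C)$ contains one, where $W = M_c^{es}$. By Proposition~\ref{prop:connected-componets}(3), $V$ is exhausted by a graph $K$ whose vertices are the critical points of $V$. Near any regular point $v \in V$ that lies in $\cl(C)$, the connected component of $M^{(c,c')}$ lying just below $v$ is locally constant, so a full manifold neighborhood of $v$ in $V$ sits inside $V \cap \cl(C)$. Thus $V \cap \cl(C)$ is open in the manifold part of $V$; it is also closed in $V$. On each open edge of $K$ the intersection is therefore clopen, and so either empty or the whole edge. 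Since $V \cap \cl(C)$ is non-empty by the type~(IIa) assumption, it either contains a whole edge of $K$ together with its endpoints, or is contained in the vertex set of $K$; in either case it contains a critical point.

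Second, I would use the product description from Proposition~\ref{prop:gradient_product_diffeo}. The diffeomorphism $h : D \times (c, c') \to C$ extends continuously to a surjection $\bar h : D \times [c, c'] \to \cl(C)$ whose restriction $\bar h(\cdot, c')$ sends $D$ onto $V \cap \cl(C)$, and analogously $\bar h(\cdot, c)$ sends $D$ onto $W \cap \cl(C)$; surjectivity is just sequential compactness of $D$ applied to a sequence in $C$ approaching a boundary point. Fixing critical points $x' \in V \cap \cl(C)$ and $x \in W \cap \cl(C)$ from the first step, write $x' = \bar h(q', c')$ and $x = \bar h(q, c)$, and define
\[\gamma'(s) = \bar h\bigl(p(s), r(s)\bigr),\]
where $p : [0, 1] \to D$ is any continuous path from $q'$ to $q$ (available because $D$ is a connected smooth manifold, hence path-connected) and $r : [0, 1] \to [c, c']$ is any strictly decreasing function with $r(0) = c'$ and $r(1) = c$. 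Since $f \circ \bar h$ depends only on the second coordinate and monotonically so, $\gamma'$ is a decreasing edge-path of $C$ joining the two critical points $x'$ and $x$.

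Finally, $\gamma$ and $\gamma'$ are both edge-paths of $C$, so Corollary~\ref{cor:every-epaths-are-htpic} supplies a homotopy \rec between them. The main obstacle will be the first step: rigorously showing that the intersection of an essential component with $\cl(C)$ already contains a critical point. The picture is intuitively transparent from how essential components branch at critical values, but the clopen argument above (or an equivalent local analysis near regular points of $V$) seems to be needed to make it airtight.
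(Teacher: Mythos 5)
Your route is genuinely different from the paper's: you build one explicit decreasing edge-path between critical points and then let Corollary \ref{cor:every-epaths-are-htpic} supply the homotopy, whereas the paper never leaves $S(D)$ --- it homotopes the given path to a longitudinal segment $[(y,t)]$ and then deforms that segment by an explicit ``linear'' homotopy $\Lambda(t,s)=[(\omega(st),t)]$ onto a path whose two ends ride single gradient trajectories into critical points. Your first step, checking that $M_{c'}^{es}\cap\cl(C)$ actually contains a critical point, addresses something the paper only covers under the extra hypothesis $M_{c'}^{es}=M_{c'}\cap\cl(C)$ of Proposition \ref{prop:joining critical levels}; the clopen idea is the right one, but it should be run on the connected set $M_{c'}^{es}$ itself (open at regular points, closed, hence everything if it met no critical point) rather than on the tree $K$: Proposition \ref{prop:connected-componets}(3) only embeds a tree through the critical points, it does not exhaust the essential component, so $M_{c'}^{es}\cap\cl(C)$ could a priori miss $K$ entirely. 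Note also that the paper's own proof of Corollary \ref{cor:every-epaths-are-htpic} points forward to the linear homotopy constructed inside the present lemma, so quoting that corollary here is, as the paper is written, mildly circular.

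The genuine gap is the assertion that $h$ extends to a \emph{continuous} surjection $\bar h\colon D\times[c,c']\to\cl(C)$. Theorem \ref{thm:gradient flow} gives the existence of the limit $\lim_{t\to c'}h(p,t)$ along each \emph{separate} trajectory; joint continuity of $\bar h$ at a boundary point whose trajectory limits to a critical point is a much stronger statement, is exactly the delicate point here, and is proved nowhere (the paper is structured precisely so as never to need it). Your argument leans on it twice: once in the sequential-compactness step that produces $q'$ with $\bar h(q',c')=x'$ (from $h(p_n,t_n)\to x'$ and $p_n\to q'$ you cannot conclude $\bar h(q',c')=x'$ without continuity at $(q',c')$), and once to conclude that $\gamma'(s)=\bar h(p(s),r(s))$ is continuous at $s=0,1$, where $p(s)$ is still moving while $r(s)$ tends to the critical level. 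The first use should be replaced by Proposition \ref{prop:joining critical levels}, which hands you a whole gradient trajectory in $\cl(C)$ emanating from the chosen critical point, i.e.\ the desired $q'$. The second is repaired exactly as in the paper: make $p(s)$ \emph{constant} near $s=0$ and $s=1$ --- this is the role of the cutoff $\phi$ in the paper's path $\gamma_1=[(\omega(\phi(t)),t)]$ --- so that near each endpoint $\gamma'$ coincides with a single trajectory and its continuity reduces to the existence of that one limit. With those two repairs your construction goes through.
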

In other words, every decreasing edge-path of $C$ is homotopic \rec 
to a decreasing edge-path $\gamma$ of $C$ such that $\gamma(-1)=x_{c'}  \in 
M_{c'} $ and $\gamma(1)=x_{c}\in M_{c} $ are critical points.

\begin{proof}
By virtue of Lemma \ref{lem:homotopy lifting from SD to clC} it is enough to 
prove the existence of such homotopy in $S(D)$.

Let $\gamma$ be a decreasing edge-path. From the argument in
proof of Lemma \ref{lem:edge-path to decreasing} it follows that 
$\pi_{es}(\gamma)$ is homotopic \rec to a longitudinal path $\gamma_0: [-1,1] 
\to S(D)$ defined as $\gamma_0(t)=[(y, t)]$. Since $y \in D$ is arbitrary, we
can assume that 
\[x_{c'} =\lim_{t\to -1} (y, t) \in M_{c'} \cap \cl(C)\]
is a critical point by Proposition \ref{prop:joining critical levels} (note 
that the limit is taken in $M$, and not in $S(D)$). By the same argument there 
exists a point $x \in D$ 
such that \[x_c =\lim_{t\to 1} (x,t)\in M_c\cap \cl(C).\] 

In the following part of the proof we construct a 'linear' homotopy from 
$\gamma_0$ to (defined below) $\gamma_1$. Let $\omega$ be a path in $D$ such 
that 
$\omega(0)= y$
and $\omega(1)= x$. 
Furthermore, let $\phi(t)$ be a non-decreasing function 
$[-1,1]\to [0,1]$, such that $\phi(t)\equiv 0$ on a small neighbourhood of 
$t=-1$ and $\phi\equiv 1$ near $t=1$. 
We define a path $\widetilde{\gamma_1}: 
[-1,1]\to S(D)$ by the formula
\[\gamma_1 = \big[\big(\omega(\phi(t)), t 
\big)\big].
\]
Note that $\gamma_1$ can be lifted to a (decreasing, by 
definition) path $\widetilde{\gamma_1}(t)\in \cl(C)$ such that
\begin{align*}
\widetilde{\gamma_1}(-1)& =\lim_{t\to -1}\big(\omega(\phi(t)), t \big)
=\lim_{t\to -1}(x,t)=x_{c'}, \\
\widetilde{\gamma_1}(1)& =\,\,\lim_{t\to 1}\,\big(\omega(\phi(t)), t \big)
= \,\,\lim_{t\to 1}\,(y,t)=x_{c}. 
\end{align*}
Finally, the formula \[\Lambda(t,s)= \big[\big(\omega(st),t\big)\big]\]
gives us a homotopy \rec between $\gamma_0$ and $\gamma_1$. By Lemma 
\ref{lem:homotopy lifting from SD to clC}, the initial path $\gamma$ is 
homotopic \rec to $\widetilde{\gamma_1}$ what ends the proof. 
\end{proof}

\begin{definition}\label{def:extended edge-path} Let $C$ be a component of type 
(IIa) of $M^{(c,c')}$. An {\textbf{extended edge-path}} of $C$ is a path
$\gamma\colon I\to M$ such that:
\begin{enumerate}
\item $\gamma(0)\in M_{c'}^{es}$ and $\gamma(1)\in M_{c}^{es}$,

\item There exist $t_0<t_1\in I$ such that
\begin{itemize}
\item if $f(\gamma(t))=c'$ then $t\leqslant t_0 $, and
\item if $f(\gamma(t))=c$ then $t\geqslant t_1 $, and
\item $\gamma(t)\cap M_d^{es} = \varnothing$, for $d\neq c,c'$.
\end{itemize}
\end{enumerate}
\end{definition}

\begin{lemma}\label{lem:extnd-pth-decomposition-vtx-edg-vtx}
Every extended edge-path $\gamma$ is homotopic \rec 
to an edge-path of the form
\[\gamma = \sigma*\alpha*\sigma',\]
where $\sigma'$, $\sigma$ are vertex-paths contained in
$M_{c'}^{es}$, $M_c^{es}$ respectively, and $\alpha$ is a strictly
decreasing edge-path.
\end{lemma}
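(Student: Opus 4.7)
The natural strategy is to use the parameters $t_0$ and $t_1$ provided by the definition to split $\gamma$ into three sub-paths
\[\gamma_\ell = \gamma\restrictedTo{[0,t_0]}, \quad \gamma_m = \gamma\restrictedTo{[t_0,t_1]}, \quad \gamma_r = \gamma\restrictedTo{[t_1,1]},\]
handle each piece by itself, and concatenate the resulting homotopies. By the definition of an extended edge-path, $\gamma_\ell$ has both endpoints in $M_{c'}^{es}$, $\gamma_r$ has both endpoints in $M_c^{es}$, and $\gamma_m$ runs from $M_{c'}^{es}$ to $M_c^{es}$ with $\gamma_m((t_0,t_1))$ avoiding every essential component of every level set.

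For the middle piece, I would first observe that the connected set $\gamma_m((t_0,t_1))$ lies in a single connected component of $M\setminus\bigcup_{d} M_d^{es}$ whose closure meets both $M_{c'}^{es}$ and $M_c^{es}$. Using Proposition \ref{prop:gradient_product_diffeo}, this component can, after a preliminary homotopy \rec, be replaced by a component $C$ of type (IIa) of $M^{(c,c')}$, so that $\gamma_m$ becomes an edge-path of $C$. Lemma \ref{lem:edge-path to decreasing} then provides a homotopy \rec from $\gamma_m$ to a decreasing edge-path, and Lemma \ref{lem:hmtpy-to-pth-crit2crit} refines this to a strictly decreasing edge-path $\alpha$ whose endpoints are critical points in $M_{c'}^{es}$ and $M_c^{es}$.

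For the two end pieces, path-connectedness of $M_{c'}^{es}$ and $M_c^{es}$ (Proposition \ref{prop:connected-componets}) furnishes vertex-paths $\sigma'\subset M_{c'}^{es}$ joining $\gamma(0)$ to the initial endpoint of $\alpha$, and $\sigma\subset M_c^{es}$ joining the terminal endpoint of $\alpha$ to $\gamma(1)$. The remaining task is to construct homotopies \rec from $\gamma_\ell$ to $\sigma'$ and from $\gamma_r$ to $\sigma$. Here the \rec condition permits the endpoints to slide within $M_{c'}^{es}$ and $M_c^{es}$ during the homotopy, so my plan is to use Theorem \ref{thm:gradient flow} to push $\gamma_\ell$ across regular level sets towards $M_{c'}^{es}$, exploiting the fact that $\gamma_\ell$ never meets $M_c$ nor any $M_d^{es}$ with $d\neq c,c'$. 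Concatenating the three homotopies yields the required decomposition.

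The main difficulty lies in the endpoint pieces: a path in $M$ with both endpoints in a subspace is not in general homotopic into that subspace, so the crux will be to exploit the structural constraints defining extended edge-paths --- namely, that $\gamma_\ell$ only touches level $c'$ and never hits $c$ or any other essential component --- to argue that every excursion of $\gamma_\ell$ outside $M_{c'}^{es}$ can be deformed back through the regular strata of $M$ while keeping its endpoints inside $M_{c'}^{es}$ throughout the homotopy, and symmetrically for $\gamma_r$.
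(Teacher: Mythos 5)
Your decomposition of $\gamma$ into three sub-paths and your treatment of the middle piece coincide with the paper's own proof: the paper cuts at $t'$, the last parameter at which $\gamma$ meets $M_{c'}$, and at $t''$, the first at which it meets $M_c$, and then disposes of the middle segment via Lemma \ref{lem:edge-path to decreasing} and Lemma \ref{lem:hmtpy-to-pth-crit2crit}, exactly as you propose. (One small slip: you must cut at these extremal times rather than at an arbitrary admissible pair $t_0<t_1$; for a generic admissible $t_0$ the point $\gamma(t_0)$ need not lie in $M_{c'}$ at all, so your claim that $\gamma_\ell$ has both endpoints in $M_{c'}^{es}$ fails as stated.)

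The genuine gap is in the endpoint pieces, and you have in effect named it yourself without closing it. Your plan --- to ``push $\gamma_\ell$ across regular level sets towards $M_{c'}^{es}$'' using Theorem \ref{thm:gradient flow} --- does not work as described: an excursion of $\gamma_\ell$ away from level $c'$ may pass through non-essential components of other critical levels, the gradient trajectories through its points converge to critical points that need not lie in $M_{c'}^{es}$, the points below level $c'$ would have to be flowed up while those above are flowed down, and nothing guarantees that this assembles into a continuous homotopy, let alone one keeping the endpoints inside $M_{c'}^{es}$ throughout. The paper's resolution is the one essential idea your proposal is missing: pass to the quotient $M_{es}$ in which every essential component is collapsed to a point. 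Under $\pi_{es}$ the piece $\widetilde{\sigma}'$ becomes a loop based at the point $[M_{c'}^{es}]$ contained in the union of the sets $\pi_{es}(\cl(C_i))$ over the components $C_i$ adjacent to $M_{c'}^{es}$; by Proposition \ref{suspension} each of these is a cone with apex $[M_{c'}^{es}]$, so the loop lies in a finite wedge of cones, which is simply connected and hence the loop is contractible to the wedge point. That contraction is then lifted back to a homotopy \rec in $M$ in the manner of Lemma \ref{lem:homotopy lifting from SD to clC}. Without this (or an equivalent) mechanism the endpoint pieces are not actually handled, so the proposal as written is incomplete precisely at the step you flag as the crux.
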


\begin{proof}
Let $t'$ be the largest $t_0$ such that the condition of the above
definition holds. Then take $\widetilde{\sigma}'$ to be
(re-parameterized) $\gamma\restrictedTo{[0,t']}$. Similarly one may define
$t''$ as the smallest $t_1$ and set $\widetilde{\sigma}=
\gamma\restrictedTo{[t'',1]}$. Finally, take
$\widetilde{\alpha}=\gamma\restrictedTo{[t',t'']}$.

Observe that $\gamma$ is contained in $\cl(C)$. Moreover by condition (2) of 
the definition above
$\widetilde{\sigma}'$ intersects only $M_{c'}$ and
$\widetilde{\sigma}$ intersects only $M_c$.
The image $\pi_{es}(\widetilde{\sigma}')$ is a loop contained in a
finite wedge of cones, hence is contractible to the middle point
of this wedge. Therefore $\widetilde{\sigma}'$ and
$\widetilde{\sigma}$ are homotopic \rec to $\sigma'$ and $\sigma$,
vertex-paths contained in $M_{c'} $ and $M_c$ respectively.

Since all edge-paths are homotopic \rec to decreasing edge-paths,
the lemma is proved.
\end{proof}

\begin{corollary}\label{cor:extnd-epaths-hmtpic-to-decreasing}
Every extended edge-path of $C$ is homotopic \rec to a decreasing edge-path.
\end{corollary}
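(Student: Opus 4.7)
My plan is to invoke Lemma \ref{lem:extnd-pth-decomposition-vtx-edg-vtx} and then eliminate the two vertex-path pieces by a simple shrinking homotopy, exploiting the fact that the homotopy \rec relation does not fix endpoints but only requires them to stay within the appropriate essential components.

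First, I would apply Lemma \ref{lem:extnd-pth-decomposition-vtx-edg-vtx} to the given extended edge-path $\gamma$ to obtain a homotopy \rec from $\gamma$ to a concatenation
\[
\gamma \;\simeq_{\text{rec}}\; \sigma' * \alpha * \sigma,
\]
where $\sigma'$ is a vertex-path in $M_{c'}^{es}$ with $\sigma'(1) = \alpha(0)$, $\alpha$ is a strictly decreasing edge-path of $C$, and $\sigma$ is a vertex-path in $M_c^{es}$ with $\sigma(0) = \alpha(1)$. Once this decomposition is in hand, it remains only to show that $\sigma' * \alpha * \sigma$ is homotopic \rec to $\alpha$ itself (suitably reparameterized).

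The second step is to construct this shrinking homotopy explicitly. For $s\in I$, define $H(\cdot, s)$ to be the path
\[
\sigma'\restrictedTo{[s,1]} \,*\, \alpha \,*\, \sigma\restrictedTo{[0,1-s]},
\]
reparameterized piecewise-linearly on $[0,1]$ so that $H(\cdot,0)=\sigma'*\alpha*\sigma$ and $H(\cdot,1)$ collapses the two vertex-path pieces to the points $\sigma'(1)=\alpha(0)$ and $\sigma(0)=\alpha(1)$, leaving only $\alpha$. For every intermediate $s$, the initial endpoint $H(0,s)=\sigma'(s)$ lies in $M_{c'}^{es}$ since $\sigma'$ is a vertex-path there, and similarly $H(1,s)=\sigma(1-s)\in M_c^{es}$. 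Thus $H$ is a homotopy \rec in the sense of Definition \ref{def:homotopy_rec}. Concatenating $H$ with the homotopy supplied by Lemma \ref{lem:extnd-pth-decomposition-vtx-edg-vtx} yields the desired homotopy \rec from $\gamma$ to the decreasing edge-path $\alpha$.

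The only subtle point is checking continuity of the reparameterization at $s=1$, where the two outer intervals degenerate to points; this is routine, since standard piecewise-linear reparameterization of a concatenation depends continuously on the splitting parameter. In fact, once Lemma \ref{lem:extnd-pth-decomposition-vtx-edg-vtx} is available, no real geometric content remains — the corollary is essentially a formal consequence of the fact that the homotopy \rec relation treats vertex-paths as slack that can be freely absorbed into the moving endpoints.
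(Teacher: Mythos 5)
Your proposal is correct and follows essentially the same route as the paper: both reduce to Lemma \ref{lem:extnd-pth-decomposition-vtx-edg-vtx} and then absorb the two vertex-path pieces, which the paper phrases as ``vertex-paths are homotopic \rec to constant paths'' and you realize by an explicit shrinking homotopy whose moving endpoints stay in the respective essential components. The only cosmetic difference is that you use the lemma's guarantee that $\alpha$ is already strictly decreasing, while the paper's proof redundantly re-invokes the fact that every edge-path is homotopic \rec to a decreasing one.
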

\begin{proof}
By Lemma \ref{lem:extnd-pth-decomposition-vtx-edg-vtx} an extended 
edge-path $\gamma$ could be written as $\sigma*\alpha *\sigma'$, where 
$\sigma,\sigma'$ are vertex-paths and $\alpha$ is
an edge-path. Since vertex-paths are homotopic \rec to constant
paths, and every edge-path is homotopic \rec to a decreasing
one, lemma follows.
\end{proof}

\begin{theorem}\label{thm:extnd-epaths-homotopic}
Classes of homotopy \rec of extended edge-paths are in bijection with edges 
of $\mathcal{R}(f)$. Moreover, each class of homotopy \rec contains a decreasing 
path joining two critical points.
\end{theorem}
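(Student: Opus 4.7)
The plan is to chain together the bijection from Proposition \ref{prop:compIIa_is_edge} (components of type (IIa) $\leftrightarrow$ edges of $\mathcal{R}(f)$) with a new bijection between homotopy \rec classes of extended edge-paths and components of type (IIa). Concretely, I would define a map $\Phi$ sending the class $[\gamma]$ of an extended edge-path of $C$ to the edge of $\mathcal{R}(f)$ corresponding to $C$ via Proposition \ref{prop:compIIa_is_edge}, and then verify that $\Phi$ is well-defined, surjective, and injective.

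The main obstacle is well-definedness. Suppose $\gamma,\gamma'$ are extended edge-paths of components $C,C'$ of type (IIa) joined by a homotopy \rec $H$; I must show $C=C'$. Since $H(0,s)$ remains in a single essential component of $M_{c'}$ for all $s\in I$ (and similarly at the other endpoint), and $\pi$ collapses each essential component to a single vertex of $\mathcal{R}(f)$ by Lemma \ref{Reeb lemma}, the composition $\pi\circ H$ is a homotopy in $\mathcal{R}(f)$ relative to its endpoints, deforming $\pi\circ\gamma$ to $\pi\circ\gamma'$. Each of these projections traverses a single edge of $\mathcal{R}(f)$, the one corresponding to $C$ or $C'$ respectively. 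In a one-dimensional graph, two distinct edges with the same endpoints are not homotopic rel endpoints, so $C=C'$.

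Surjectivity follows directly from Proposition \ref{prop:joining critical levels}: given a component $C$ of type (IIa) of $M^{(c,c')}$, there is a negative gradient trajectory in $\cl(C)$ connecting a critical point in $M_{c'}^{es}\cap\cl(C)$ to a point of $M_c^{es}$, and by Remark \ref{continuous extension} this extends to a decreasing edge-path, which is in particular an extended edge-path of $C$. For injectivity, if $\Phi([\gamma])=\Phi([\gamma'])$ then $\gamma,\gamma'$ are extended edge-paths of the same $C$; Corollary \ref{cor:extnd-epaths-hmtpic-to-decreasing} reduces each to a decreasing edge-path of $C$, and Corollary \ref{cor:every-epaths-are-htpic} identifies any two edge-paths of $C$ under homotopy \rec, yielding $[\gamma]=[\gamma']$.

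The ``moreover'' clause is then immediate: starting from any extended edge-path, Corollary \ref{cor:extnd-epaths-hmtpic-to-decreasing} produces a homotopy \rec to a decreasing edge-path of $C$, and Lemma \ref{lem:hmtpy-to-pth-crit2crit} further homotopes it within $\cl(C)$ to a decreasing edge-path joining two critical points. All the analytic content has already been packaged into the preceding lemmas and corollaries, so the novel input of this theorem is really the well-definedness step through the projection to the one-dimensional Reeb graph.
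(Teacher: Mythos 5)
Your proof follows essentially the same route as the paper's: it chains Proposition \ref{prop:compIIa_is_edge} with the reduction of every extended edge-path of a fixed component $C$ to a single homotopy \rec class via Corollaries \ref{cor:extnd-epaths-hmtpic-to-decreasing} and \ref{cor:every-epaths-are-htpic}, and extracts the decreasing representative joining critical points from Lemma \ref{lem:hmtpy-to-pth-crit2crit}. The one point where you go beyond the paper is the explicit well-definedness step (a homotopy \rec projects under $\pi$ to a homotopy rel endpoints in the graph $\mathcal{R}(f)$, so extended edge-paths of distinct type (IIa) components cannot lie in the same class); the paper leaves this direction implicit, and your argument for it is correct.
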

\begin{proof}

Given an extended edge-path $\gamma$ we may homotope
\rec it to appropriate edge-path by using Corollary
\ref{cor:extnd-epaths-hmtpic-to-decreasing}. Since all
edge-paths of $C$ are homotopic \rec by Corollary 
\ref{cor:every-epaths-are-htpic} there is just one
class of homotopy \rec of extended edge-paths of $C$.
By Lemma \ref{lem:hmtpy-to-pth-crit2crit} we choose a decreasing
representative which connects two critical points.
Since component $C$ is of type (IIa) (by definition of edge-path),
$C$ corresponds to a unique edge of $\mathcal{R}(f)$ by Proposition 
\ref{prop:compIIa_is_edge}.
\end{proof}

\section{Graphs}\label{sec:Graphs}

Let $V(f)$ be the set of all classes of homotopy \rec of
vertex-paths $[\varepsilon]$, and  $E(f)$ be the set all classes of homotopy 
\rec of extended edge-paths.

\begin{definition}\label{Reeb by paths}
Let $\mathcal{G}(f)$ denote the set $\{V(f), E(f)\}$ and $V(f)$ ($E(f)$) will be
called the set of vertices (set of edges, respectively) of $\mathcal{G}(f)$.
\end{definition}

We say that two vertices $[\varepsilon_1]$ and  $[\varepsilon_2]$
are adjacent by edge if there exists an extended edge-path $\gamma\colon 
I\to M$ such that $[\gamma(0)]=[\varepsilon_1]$ and
$[\gamma(1)]=[\varepsilon_2]$ ($\gamma(0)$ and $\gamma(1)$ are considered as 
constant maps). Note that there might be many different edges between two 
vertices.

\begin{proposition}\label{graph1}
The pair $\mathcal{G}(f)=(V(f),E(f))$ defines a directed finite graph.
\end{proposition}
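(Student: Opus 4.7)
The plan is to verify three things: that $V(f)$ is finite, that $E(f)$ is finite, and that the source/target assignment descends to a well-defined directed-graph structure on homotopy \rec classes.

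First I would handle finiteness. For $V(f)$, observe that two vertex-paths in the same essential component $M_c^{es}$ are homotopic \rec, since $M_c^{es}$ is path-connected by Proposition \ref{prop:connected-componets}(1), and any homotopy inside a single essential component automatically satisfies condition (2) of Definition \ref{def:homotopy_rec}. Conversely, if $H$ is a homotopy \rec between $\varepsilon_1$ and $\varepsilon_2$, then $s\mapsto H(0,s)$ is a path in $M^{es}_{c'}$, forcing $\varepsilon_1$ and $\varepsilon_2$ to start in the same essential component (and hence, since both are vertex-paths, to be contained in the same essential component). Thus $V(f)$ is in bijection with the set of essential components of critical level sets, which by Lemma \ref{Reeb lemma} is the (finite) vertex set of $\mathcal{R}(f)$. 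For $E(f)$, finiteness is immediate from Theorem \ref{thm:extnd-epaths-homotopic}, which puts the classes of homotopy \rec of extended edge-paths in bijection with the edges of the finite graph $\mathcal{R}(f)$.

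Next I would define the incidence maps $s,t\colon E(f)\to V(f)$ by sending a class $[\gamma]$ of an extended edge-path $\gamma$ with $\gamma(0)\in M^{es}_{c'}$ and $\gamma(1)\in M^{es}_{c}$ to the vertex classes $[\gamma(0)]$ and $[\gamma(1)]$ respectively, viewing the endpoints as constant (hence vertex-) paths. Well-definedness reduces to showing that if $\gamma_0$ and $\gamma_1$ are homotopic \rec via $H$, then the constant paths $\gamma_0(0)$ and $\gamma_1(0)$ represent the same element of $V(f)$, and similarly at $t=1$. But by condition (2) of Definition \ref{def:homotopy_rec}, $s\mapsto H(0,s)$ is a continuous path in $M^{es}_{c'}$; as essential components are connected, this path stays in one essential component, and by the first paragraph the constant endpoints determine the same class in $V(f)$. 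An identical argument applies at $t=1$.

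Finally, the direction is built in: the critical values satisfy $c<c'$, so we orient each edge from its source at the higher level $c'$ to its target at the lower level $c$. This orientation is intrinsic to the extended edge-path (condition (1) of Definition \ref{def:extended edge-path} fixes which endpoint lies over which critical value) and is preserved by homotopies \rec, since $H(0,\cdot)$ remains in $M^{es}_{c'}$ and $H(1,\cdot)$ in $M^{es}_c$ throughout. The only mildly delicate point — and the closest thing to an obstacle — is to make sure that one does not conflate the directionality with the possibility of multiple edges between the same pair of vertices; but this is already acknowledged in the text preceding the proposition, and since $E(f)$ is only required to be a set (not a subset of $V(f)\times V(f)$), the resulting data of $(V(f),E(f),s,t)$ is exactly a finite directed multigraph.
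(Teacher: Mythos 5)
Your proof is correct and follows essentially the same route as the paper: the incidence maps are defined by sending $[\gamma]$ to the classes of the constant paths at its endpoints, finiteness of $E(f)$ comes from the bijection of edge classes with type (IIa) components (equivalently, with edges of $\mathcal{R}(f)$), and the orientation is the decreasing direction from $M^{es}_{c'}$ to $M^{es}_{c}$. You spell out the well-definedness of the incidence maps (via the path $s\mapsto H(0,s)$ in $M^{es}_{c'}$) and the identification of $V(f)$ with the finite set of essential components in more detail than the paper does, but the substance is identical.
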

\begin{proof}
Given an extended edge-path $\gamma$ we consider the constant vertex-paths
$\varepsilon_1 \equiv \gamma(0)$ and $\varepsilon_2\equiv
\gamma(1)$. If $\gamma^{\prime}\in[\gamma]$, then paths
$\varepsilon^{\prime}_1 \equiv \gamma^{\prime}(0)$ and
$\varepsilon_2^{\prime}\equiv \gamma^{\prime}(1)$ are homotopic
\rec to $\varepsilon_1$, $\varepsilon_2$, respectively. 
We associate the class $[\gamma]$ with the pair of vertices (a 
$1$\nobreakdash-simplex) $([\gamma(0)],[\gamma(1)])$. This endows 
$\mathcal{G}(f)$ with a simplicial structure. 

By Corollary \ref{cor:finite-numbr-contcd-compnts} we have a finite 
number of components of type (IIa), hence by Proposition 
\ref{prop:compIIa_is_edge} we have a finite number of edges. In every class of 
homotopy \rec we may find a decreasing path $\gamma$ by Corollary 
\ref{cor:extnd-epaths-hmtpic-to-decreasing} and we orient the edge in 
$\mathcal{G}(f)$ accordingly. 
\end{proof}

We write $[\gamma]=\big([\gamma(0)],[\gamma(1)], C\big)$ to indicate $C$, the 
component 
of type (IIa), to which $\gamma$ belongs.

\begin{theorem}\label{thm:G(f)-PLhomeo-R(f)}
The graph $\mathcal{G}(f)$ is homeomorphic by a simplicial 
homeomorphism to 
$\mathcal{R}(f) $, the Reeb graph of $f$.
\end{theorem}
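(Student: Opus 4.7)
The plan is to construct an explicit simplicial bijection $\Phi\colon \mathcal{G}(f) \to \mathcal{R}(f)$ at the level of $0$- and $1$-simplices, verify that it respects incidence, and then extend linearly on each edge to obtain a simplicial homeomorphism of the underlying realizations.

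On vertices I would define $\Phi$ as follows: a class $[\varepsilon]\in V(f)$ is represented by a vertex-path lying in some essential component $M_c^{es}$, and by Lemma \ref{Reeb lemma} this essential component corresponds to a unique vertex of $\mathcal{R}(f)$; declare that vertex to be $\Phi([\varepsilon])$. Well-definedness and injectivity reduce to showing that two vertex-paths are homotopic \rec if and only if they lie in the same essential component. The ``only if'' direction follows from the observation that in any homotopy \rec $H\colon I\times I\to M$ the traces $s\mapsto H(0,s)$ and $s\mapsto H(1,s)$ are continuous curves confined to essential components of a single critical level $M_c$, which are mutually disjoint closed sets, so each trace is locally constant at the level of components. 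The ``if'' direction follows from the remark after Definition \ref{def:homotopy_rec}: both paths contract \rec to a constant path at any chosen critical point of $M_c^{es}$, and are thus homotopic \rec to each other by transitivity. Surjectivity is immediate, since every essential component contains a critical point whose constant path is a vertex-path.

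On edges, Theorem \ref{thm:extnd-epaths-homotopic} combined with Proposition \ref{prop:compIIa_is_edge} gives a direct bijection between $E(f)$ and the edges of $\mathcal{R}(f)$: each class $[\gamma]\in E(f)$ is associated with a unique type (IIa) component $C$, which in turn corresponds to a unique edge $J=\pi(\cl(C))$ of $\mathcal{R}(f)$. Set $\Phi([\gamma])=J$. Simpliciality amounts to checking that if $[\gamma]=\bigl([\gamma(0)],[\gamma(1)],C\bigr)$ then the endpoints of $J$ are $\Phi([\gamma(0)])$ and $\Phi([\gamma(1)])$; this follows from the description of $\pi(\cl(C))$ inside the proof of Proposition \ref{prop:compIIa_is_edge}, since $\pi(\cl(C)\cap M_c)$ and $\pi(\cl(C)\cap M_{c'})$ are precisely the two boundary vertices of $J$ and the essential components $\cl(C)\cap M_c$, $\cl(C)\cap M_{c'}$ carry representatives of $[\gamma(1)]$ and $[\gamma(0)]$ respectively.

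To finish, I would extend $\Phi$ to a homeomorphism by declaring it linear on each closed edge, using the strictly monotonic parametrization of edges of $\mathcal{R}(f)$ by $\widetilde{f}$ (Lemma \ref{Reeb lemma}) together with the compatible decreasing orientation on edges of $\mathcal{G}(f)$ supplied by Proposition \ref{graph1}. The main obstacle I expect is the vertex bijection, specifically ruling out ``cross-component'' homotopies \rec between vertex-paths at the same critical level; once the trace argument above is in place, everything else is bookkeeping on top of Proposition \ref{prop:compIIa_is_edge} and Theorem \ref{thm:extnd-epaths-homotopic}.
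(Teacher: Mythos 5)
Your proposal is correct and follows essentially the same route as the paper: the same map $[\varepsilon]\mapsto\pi(\varepsilon(0))$, $[\gamma]\mapsto\pi(\cl(C))$, with the edge bijection obtained by composing Theorem \ref{thm:extnd-epaths-homotopic} with Proposition \ref{prop:compIIa_is_edge}. You merely spell out details the paper leaves implicit (the trace argument for well-definedness on vertices and the incidence check), which is a welcome but not substantively different elaboration.
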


\begin{proof}
Consider the map $\mathcal{G}(f)\to \mathcal{R}(f) $
defined by
\begin{align*}
V(f) \ni[\varepsilon]& \mapsto \pi(\varepsilon(0)),\\
E(f)\ni[\gamma]=\big([\gamma(0)],[\gamma(1)],C\big) & \mapsto \pi(\cl(C)).
\end{align*}
On vertices the map is bijective by the definition of the Reeb graph.
By Theorem \ref{thm:extnd-epaths-homotopic}, classes of extended edge-paths
correspond bijectively to connected components of type (IIa). By Proposition
\ref{prop:compIIa_is_edge}, these components correspond bijectively to edges of 
the Reeb graph.
\end{proof}

\subsection{A realization of the graph $\Gamma(f)$ as a subspace of $M$}

In this section we construct a finite one-dimensional complex $\Gamma(f)\subset 
M$ which is homotopy equivalent to $\mathcal{R}(f)$. 
For each critical value $c$ and for each component $C$ of type (IIa) choose a 
point $x_c^C$ such that $x_c^C \in \cl(C)\cap M_c^{es}$.

\begin{definition}\label{def:graph_Gamma}
We will denote by $\Gamma(f)$ the 1-dimensional CW-complex, defined as follows.
\begin{enumerate}[i)]
\item $0$\nobreakdash-cells are critical points of $f$;
\item $1$\nobreakdash-cells are edges of a `spanning tree' 
$\Gamma^{es}_c \subset M^{es}_c$ (provided by part (3) of Proposition 
\ref{prop:connected-componets}), for every essential component in every level 
set. Moreover,
\item as additional $1$\nobreakdash-cells, in every component $C$ of 
type (IIa) we choose a decreasing edge-path $\gamma_C\in E(f) $ joining the 
chosen critical points. The existence of such $\gamma_C$ is proved in Lemma 
\ref{lem:hmtpy-to-pth-crit2crit}.
\end{enumerate}
\end{definition}

\begin{proposition}\label{prop:Reeb of Gamma equal to Reeb}

Consider the restricted function 
$f\raisebox{-1pt}{\big{|}}_{\Gamma}\colon\Gamma(f)
\longrightarrow \mathbb{R}$. Then there is a simplicial homeomorphism of the 
Reeb 
graph $\mathcal{R}_{f|_{\Gamma}}$ of
$f\raisebox{-1pt}{\big{|}}_{\Gamma}$ and $\mathcal{R}(f) $ which is orientation 
preserving.
\end{proposition}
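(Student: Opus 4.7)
The plan is to exhibit a concrete simplicial map from $\mathcal{R}_{f|_\Gamma}$ to $\mathcal{R}(f)$ induced by the projection $\pi$ and to check that, level set by level set, it bijects vertices with vertices and edges with edges while respecting the orientation coming from decreasing $f$.

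First I would analyse the level sets of $f|_{\Gamma}$ and their connected components. Because the spanning tree $\Gamma_c^{es}$ from Proposition \ref{prop:connected-componets}(3) is contained in $M_c^{es}$, it lies entirely in a single level set of $f|_{\Gamma}$; the chosen points $x_c^C$ used as endpoints of the decreasing edge-paths $\gamma_C$ are critical points of $f$, hence lie in $\Gamma_c^{es}$. By construction each $\gamma_C$ is strictly monotonic, so it meets any level $M_d$ in at most one point, meeting $M_{c}$ and $M_{c'}$ only at its endpoints (which are already in the spanning trees of $M_c^{es}$ and $M_{c'}^{es}$), and meeting regular levels in a single interior point. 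Therefore, for a critical value $c$ of $f$, every connected component of $(f|_{\Gamma})^{-1}(c)$ is exactly one spanning tree $\Gamma_c^{es}$, while for a regular value $d$ the connected components of $(f|_{\Gamma})^{-1}(d)$ are singletons, one per type (IIa) component $C$ with $d\in f(C)$.

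Next I would define $\Phi\colon \mathcal{R}_{f|_{\Gamma}} \to \mathcal{R}(f)$ as the map induced by $\pi\circ\iota\colon \Gamma(f)\to \mathcal{R}(f)$. This is well-defined because points of $\Gamma(f)$ lying in a common connected component of a level set of $f|_\Gamma$ lie, a fortiori, in a common connected component of the corresponding level set of $f$. By the structure described above, $\Phi$ sends every vertex of $\mathcal{R}_{f|_{\Gamma}}$ (i.e. the class of a spanning tree $\Gamma_c^{es}$) to the vertex of $\mathcal{R}(f)$ corresponding to $M_c^{es}$ (using Lemma \ref{Reeb lemma}), and every edge (i.e. the Reeb-class of a $\gamma_C$) to the edge $\pi(\cl(C))$ of $\mathcal{R}(f)$, which by Proposition \ref{prop:compIIa_is_edge} is a bijective correspondence on edges. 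The map on vertices is bijective since essential components of critical level sets are in bijection with both sets.

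Finally, I would conclude that $\Phi$ is a continuous bijection between compact Hausdorff simplicial 1\nobreakdash-complexes that is linear (affine) on each $1$\nobreakdash-cell because $f|_{\gamma_C}$ and $\widetilde f$ are both strictly monotonic on the relevant edges. Hence $\Phi$ is a simplicial homeomorphism. Orientation preservation is automatic: on each edge the orientation in both graphs is determined by the direction of decrease of $f$ (respectively $\widetilde f$), and $\gamma_C$ is decreasing by construction, so $\Phi$ matches the two orientations. The main obstacle to make rigorous is the level-set analysis: one must verify carefully that no spurious connections appear in $(f|_{\Gamma})^{-1}(c)$ at critical levels, i.e. that distinct $\gamma_C$'s together with distinct spanning trees $\Gamma_c^{es,i}$ do not accidentally glue into the same connected component. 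This follows from the fact that the closures $\cl(C)$ meet $M_c$ only inside $M_c^{es}\cap \cl(C)$, so different essential components at level $c$ stay separated in $\Gamma(f)$.
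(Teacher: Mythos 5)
Your proof is correct and follows essentially the same route as the paper's: it identifies the vertices of $\mathcal{R}_{f|_\Gamma}$ with the collapsed spanning trees $\Gamma^{es}_c$ and its edges with the chosen decreasing paths $\gamma_C$, then matches these with $\mathcal{R}(f)$ via the bijection between type (IIa) components and Reeb edges (Proposition \ref{prop:compIIa_is_edge}), with monotonicity of $\gamma_C$ giving linearity and orientation. The only cosmetic difference is that the paper factors the homeomorphism through $\mathcal{G}(f)$ using Theorem \ref{thm:G(f)-PLhomeo-R(f)} with an explicit formula for the map, whereas you map directly via $\pi\circ\iota$; your additional check that no spurious identifications occur at critical levels is a point the paper leaves implicit.
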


\begin{proof}
Every point in $\Gamma(f)$ belongs either to the image of some $\gamma_C$, or 
to an essential component of a level set. Hence every point in 
$\mathcal{R}_{f|_\Gamma}$ is either a single point from the 
image of $\gamma_C$, or an image of the whole $\Gamma^{es}_c $.
Define $\psi\colon \mathcal{R}_{f|_\Gamma} \to \mathcal{G}(f)$ by 
\[\psi(p)=
\begin{cases}
t[x^C_c]+(1-t)[x^C_{c'}]&\text{if $p=\pi(\gamma_C(t))$ for some $t\in (0,1),$}\\
 [x^C_c] &\text{if $p= \pi(\Gamma^{es} _c\cap C)$.}
\end{cases}
\]

Recall that $[x_c^C]$ denotes the class of homotopy \rec of a constant 
path. Since we chose exactly one representative $\gamma_C$ from every $C$, and 
$\gamma_C$ is decreasing, $\psi$ is linear on edges. Moreover $\psi$ is 
bijective on vertices, hence the composition 
\[\mathcal{R}_{f|_\Gamma}\xrightarrow{\psi} \mathcal{G}(f)\to \mathcal{R}(f)\] 
is the 
required simplicial homeomorphism (the 
second map is given by Theorem \ref{thm:G(f)-PLhomeo-R(f)}).
Finally orientation of $\mathcal{R}(f) $ is given by $f=f/_{\!\sim}$ and 
corresponds to 
the orientation of $\mathcal{R}_{f|_\Gamma}$ induced by $f|_{\Gamma}$, thus 
these two orientations agree.
\end{proof}

\begin{remark}
As a matter of fact we have 
\begin{align*}
f\raisebox{-1pt}{\big{|}}_{\Gamma}\!\raisebox{-2pt}{\Big{/}}_{\!\!\!\!\sim} &= 
f/_{\!\sim}
\intertext{which induces the function} 
\widetilde{f} \colon\mathcal{R}(f) &\cong\mathcal{R}_{f|_\Gamma}\to \mathbb{R}.
\end{align*}
\end{remark}

\begin{corollary}\label{cor:homotopy equivalence}
The graph $\Gamma(f)\subset M $ is homotopy equivalent to the Reeb
graph $\mathcal{R}(f) $.
\end{corollary}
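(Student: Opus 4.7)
The plan is to deduce the corollary from Proposition~\ref{prop:Reeb of Gamma equal to Reeb} by showing that the canonical projection $\pi_\Gamma\colon \Gamma(f) \to \mathcal{R}_{f|_\Gamma}$ is itself a homotopy equivalence. Since that proposition already supplies a simplicial homeomorphism $\mathcal{R}_{f|_\Gamma} \cong \mathcal{R}(f)$, composing the two maps would yield the desired homotopy equivalence $\Gamma(f)\simeq \mathcal{R}(f)$.

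To handle $\pi_\Gamma$, I would first pin down which subsets of $\Gamma(f)$ are collapsed. By Definition~\ref{def:graph_Gamma}, the $1$-cells of $\Gamma(f)$ are of two kinds: the horizontal edges of the spanning trees $\Gamma^{es}_c \subset M_c^{es}$, and the strictly decreasing edge-paths $\gamma_C$ through components $C$ of type (IIa). Consequently for a critical value $c$ the level set $\Gamma(f)\cap f^{-1}(c)$ consists of the trees $\Gamma^{es}_c$ (one per essential component of $M_c$) together with endpoints of the $\gamma_C$'s, which are critical points already lying in those trees; for any regular value of $f|_\Gamma$ the level set is a finite discrete collection of interior points of the $\gamma_C$'s. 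The only non-trivial identifications made by $\pi_\Gamma$ are therefore the collapses of the individual trees $\Gamma^{es}_c$ to points.

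The collection $\{\Gamma^{es}_c\}$, indexed over all critical values and all essential components of their level sets, forms a pairwise disjoint family of subcomplexes of the finite CW-complex $\Gamma(f)$, and each member is contractible by Proposition~\ref{prop:connected-componets}(3). Invoking the standard fact that the quotient of a CW-complex by a disjoint union of contractible subcomplexes is a homotopy equivalence, I conclude that $\pi_\Gamma$ is a homotopy equivalence, and hence so is its composition with the simplicial homeomorphism of Proposition~\ref{prop:Reeb of Gamma equal to Reeb}.

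The main step requiring care is the verification that the decreasing edge-paths $\gamma_C$ do not contribute further identifications under $\pi_\Gamma$; this is the only potential obstacle, but it is handled by observing that each $\gamma_C$ is strictly decreasing (so it meets every level in at most one point) and is confined to a single component $C$ of type (IIa), so two distinct $\gamma_C$, $\gamma_{C'}$ can be Reeb-equivalent only at critical endpoints, which already lie in some $\Gamma^{es}_c$ and are thus absorbed by the tree collapse.
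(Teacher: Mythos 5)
Your proof is correct and follows essentially the same route as the paper: both factor the map through $\mathcal{R}_{f|_\Gamma}$, observe that the fibres of $\Gamma(f)\to\mathcal{R}_{f|_\Gamma}$ are single points or the contractible trees $\Gamma^{es}_c$, and invoke the homeomorphism of Proposition~\ref{prop:Reeb of Gamma equal to Reeb}. You merely spell out the CW-collapse argument that the paper leaves as ``the result follows easily.''
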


\begin{proof}
Note that the pre-image in $\Gamma(f)\to \mathcal{R}_{f|_\Gamma}$ of every 
point is either a single point or a spanning tree $\Gamma^{es}_c$.
The result follows easily.
\end{proof}

\section{Applications}\label{sec:applications}

Let $\iota \colon \Gamma(f)\hookrightarrow M$ be the embedding given by 
Definition \ref{def:graph_Gamma}. The composition of maps 
\[\pi \circ \iota\colon \Gamma(f) \to \mathcal{R}(f)\]
induces an isomorphism of the values of any homotopy functor, in particular of 
the homotopy and homology groups. Recall that $\pi_1(\Gamma(f))\cong 
\pi_1(\mathcal{R}(f) )$ which is a free group  $\mathbb{F}_r$ on $r\geqslant 0$ 
generators.

Moreover, we can work in the pointed category, since a choice of a
vertex  $x_0$ of $\Gamma(f)$ as the distinguished point descends to a unique
vertex  $\pi(x_0)$ of $\mathcal{R}(f) $ which we set as the distinguished point 
of the latter. Consequently the first homology group 
$H_1(\mathcal{R}(f) ;\mathbb{Z} )= \pi_1(\mathcal{R}(f) )_{ab} = \mathbb{Z}^r$, 
and by Universal Coefficient Theorem $H^1(\mathcal{R}(f);\mathbb{Z}) = 
\mathbb{Z}^r$. By the same argument $H_1(\mathcal{R}(f) ;G )= G^r$ for any 
coefficient group $G$, and dually $H^1(\mathcal{R}(f) ;R) = R^r$ for any 
coefficient ring $R$.

\begin{proposition}\label{composition is identity}
The  map $\pi \circ \iota\colon  \Gamma(f) \to \mathcal{R}(f) $
induces isomorphisms 
\begin{itemize}
\item on fundamental groups groups \[(\pi \circ \iota)_\# = \pi_\# \circ
\iota_\# \colon  \mathbb{F}_r \to \mathbb{F}_r,\]
\item on the first homology groups
\[ (\pi\circ \iota)_*= \pi_* \circ  \iota_* \colon \mathbb{Z}^r \to 
\mathbb{Z}^r,\] 
\item and on the first cohomology groups 
\[(\pi\circ \iota)^*  = \iota^*\circ
\pi^*\colon R^r \to R^r\] for every coefficient ring $R$.
\end{itemize}
\end{proposition}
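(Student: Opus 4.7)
The plan is to identify $\pi\circ\iota$ with the homotopy equivalence implicit in Corollary \ref{cor:homotopy equivalence} and then invoke functoriality. Explicitly, I would factor
\[
\pi\circ\iota\;=\;\Psi\circ\pi_\Gamma,
\]
where $\pi_\Gamma\colon \Gamma(f)\to\mathcal{R}_{f|_{\Gamma}}$ is the Reeb quotient of the restricted function $f|_{\Gamma}$ and $\Psi\colon \mathcal{R}_{f|_{\Gamma}}\to\mathcal{R}(f)$ is the simplicial homeomorphism supplied by Proposition \ref{prop:Reeb of Gamma equal to Reeb}. Since a homotopy equivalence induces isomorphisms on every homotopy functor, the three claimed isomorphisms on $\pi_1$, $H_1$ and $H^1$ will follow immediately.

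The first step is the commutativity check. Because $\iota$ is the inclusion, $\pi\circ\iota$ is just $\pi|_{\Gamma(f)}$: for $x\in\Gamma(f)\subset M$ it sends $x$ to its connected component in $M_{f(x)}$. Unwinding the two-step definition of $\Psi$ (through $\psi\colon \mathcal{R}_{f|_{\Gamma}}\to\mathcal{G}(f)$ and Theorem \ref{thm:G(f)-PLhomeo-R(f)}), one verifies that $\Psi$ sends the $\pi_\Gamma$-image of each spanning tree $\Gamma^{es}_c$ to the single vertex $\pi(\Gamma^{es}_c)$ of $\mathcal{R}(f)$ (well-defined since $\Gamma^{es}_c$ lies entirely in an essential component $M^{es}_c$), and that it sends the $\pi_\Gamma$-image of the interior of each edge-path $\gamma_C$ linearly onto the edge $\pi(\cl(C))$. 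These are exactly the values of $\pi|_{\Gamma(f)}$ on the corresponding points.

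The second step is to observe that $\pi_\Gamma$ is a homotopy equivalence: its only non-trivial fibres are the spanning trees $\Gamma^{es}_c$, each of which is a contractible subcomplex of the finite $1$-dimensional CW complex $\Gamma(f)$ by part (3) of Proposition \ref{prop:connected-componets}. There are only finitely many such subcomplexes and they are pairwise disjoint, so collapsing them successively is a composition of homotopy equivalences of CW pairs. Hence $\pi\circ\iota = \Psi\circ\pi_\Gamma$ is a homotopy equivalence, and the claimed isomorphisms on $\pi_1$, $H_1(-;\mathbb{Z})$, and $H^1(-;R)$ follow by functoriality, using the universal coefficient theorem for the cohomology statement. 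The main obstacle will be the commutativity verification in the first step: one must trace $\Psi$ through the intermediate graph $\mathcal{G}(f)$ and check that $\Psi\circ\pi_\Gamma$ and $\pi|_{\Gamma(f)}$ agree pointwise on $\Gamma(f)$. Once this compatibility is recorded, the remainder of the argument is a standard consequence of collapsing contractible subcomplexes in a CW pair.
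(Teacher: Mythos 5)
Your proposal is correct and follows essentially the same route as the paper, which states the proposition without a separate proof precisely because it is the content of Proposition \ref{prop:Reeb of Gamma equal to Reeb} together with Corollary \ref{cor:homotopy equivalence}: $\pi\circ\iota$ factors through the quotient $\Gamma(f)\to\mathcal{R}_{f|_{\Gamma}}$ (a homotopy equivalence, since the fibres are points or the contractible trees $\Gamma^{es}_c$) followed by the simplicial homeomorphism onto $\mathcal{R}(f)$. The only caveat is that $\Psi\circ\pi_\Gamma$ and $\pi|_{\Gamma(f)}$ need only agree up to a monotone reparametrisation on the interiors of edges (the $\gamma_C$ are decreasing but not $f$-linearly parametrised), so they are homotopic rather than literally equal pointwise, which is all that is needed for the induced maps.
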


\begin{corollary}\label{cor:main application}
\begin{enumerate}[i)]
\item If $\pi_1(M)$ is a finite group, \textup{(}e.g. $M$ is simply
connected\textup{)} then the Reeb graph $\mathcal{R}(f)$ of every function is a 
tree.
\item If $\pi_1(M)$ is an abelian group, or more general, a
discrete amenable group, the Reeb
graph $\mathcal{R}(f)$ of every function contains at most one loop.
\end{enumerate}
\end{corollary}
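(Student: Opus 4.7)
My plan is to leverage Proposition \ref{composition is identity} to transfer the problem from the Reeb graph to a statement about subgroups of $\pi_1(M)$. By Corollary \ref{cor:homotopy equivalence}, $\pi_1(\Gamma(f)) \cong \pi_1(\mathcal{R}(f)) \cong \mathbb{F}_r$, where $r$ is the number of independent loops of $\mathcal{R}(f)$. Proposition \ref{composition is identity} tells us that the composition $(\pi\circ\iota)_\# = \pi_\# \circ \iota_\# \colon \mathbb{F}_r \to \mathbb{F}_r$ is an isomorphism, and in particular is injective. Since $\iota_\#$ factors as $\mathbb{F}_r \xrightarrow{\iota_\#} \pi_1(M) \xrightarrow{\pi_\#} \mathbb{F}_r$, the injectivity of the composition forces $\iota_\# \colon \mathbb{F}_r \hookrightarrow \pi_1(M)$ to be injective. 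Thus the entire argument reduces to the purely group-theoretic question: for which $r$ can $\mathbb{F}_r$ embed as a subgroup of $\pi_1(M)$?

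For part (i), I would argue as follows. Suppose $\pi_1(M)$ is finite. Any nontrivial free group contains an element of infinite order (indeed every nontrivial element of $\mathbb{F}_r$ with $r\geqslant 1$ has infinite order). A finite group cannot contain an element of infinite order. Therefore $\mathbb{F}_r$ embeds in $\pi_1(M)$ only when $r = 0$, i.e.\ $\mathcal{R}(f)$ has no loops. Since $\mathcal{R}(f)$ is a finite connected graph with trivial fundamental group, it is a tree.

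For part (ii), I would use the well-known fact that the class of amenable groups is closed under taking subgroups, together with the fact that $\mathbb{F}_2$ is not amenable (see \cite{Nowak}). If $r \geqslant 2$, then $\mathbb{F}_r$ contains $\mathbb{F}_2$ as a subgroup, so the embedding $\mathbb{F}_r \hookrightarrow \pi_1(M)$ gives an embedding of $\mathbb{F}_2$ into $\pi_1(M)$, contradicting amenability. Hence $r \leqslant 1$, i.e.\ $\mathcal{R}(f)$ contains at most one loop. The abelian case is a subcase since abelian groups are amenable. Even more generally, the same argument works for any $\pi_1(M)$ that does not contain $\mathbb{F}_2$, which is the form advertised in the introduction.

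The proof is essentially immediate once Proposition \ref{composition is identity} is in hand, so there is no real obstacle; the only subtlety worth stating carefully is why injectivity of $\iota_\#$ follows from injectivity of the composition, which is a trivial categorical observation. The substantive work has already been done in constructing $\Gamma(f)$ and establishing that $\pi\circ\iota$ induces identity on $\pi_1$.
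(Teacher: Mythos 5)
Your proof is correct and follows essentially the same route as the paper: both rest on Proposition \ref{composition is identity} and the factorization of $(\pi\circ\iota)_\#$ through $\pi_1(M)$, concluding via standard facts about free groups versus finite and amenable groups. The only cosmetic difference is that you phrase every case uniformly through the injectivity of $\iota_\#$ (i.e.\ $\mathbb{F}_r$ embedding as a subgroup of $\pi_1(M)$), whereas the paper handles the finite and abelian cases by noting the image of the composition must be finite or abelian, reserving the subgroup argument for the amenable case.
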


\begin{proof} If $\pi_1(M)$ is finite then $(\pi \circ \iota)_\# $ factorizes 
throughout a finite group, thus its image is a finite group and must not be an 
isomorphism.

Analogously, if $\pi_1(M)$ is abelian then $(\pi \circ \iota)_\# $ factorizes 
throughout an abelian group. Therefore its image is abelian and hence $(\pi 
\circ \iota)_\# $ can not be an isomorphism if $r\geqslant 2$. Since 
$\iota_\#$ is a monomorphism, $\pi_1(M)$ contains $\mathbb{F}_r$ for $r\geqslant 
2$ as a subgroup. This is impossible if $\pi_1(M)$ is a discrete finitely 
generated amenable group (cf. \cite{Nowak}).
\end{proof}

It is known that discrete nilpotent and solvable groups are amenable.

\begin{corollary}\label{cor:reeb examples}
Let $f\colon  X\to \mathbb{R}$, be any $C^1$\nobreakdash-function with isolated 
 
critical points.
\begin{itemize}
 \item If $X= S^n$ for $n\geqslant 2$, or $X=\mathbb{C} P^n, \mathbb{R} P^n$ 
for any $n$, then $\mathcal{R}(f)$ is a tree.
\item If $X=\mathbb{T}^n$ for any $n$ then $\mathcal{R}(f)$ is either a tree or 
is homotopy equivalent to a circle.
\item If $X=M_g$ a closed orientable surface of genus $g$ then $\mathcal{R}(f)$ 
contains at most $2g$ loops.
\item If $X=M_g$ a closed non-orientable surface of genus $g$ then 
$\mathcal{R}(f)$ contains at most $g$ loops.
\end{itemize}
\end{corollary}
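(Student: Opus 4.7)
The plan is to reduce each bullet of Corollary~\ref{cor:reeb examples} to the algebraic consequences of Proposition~\ref{composition is identity}, namely that $\iota_\#\colon \pi_1(\Gamma(f)) \to \pi_1(M)$ is a monomorphism and that $\iota_*\colon H_1(\Gamma(f); G) \to H_1(M; G)$ is a monomorphism for every abelian coefficient group $G$. Writing $\pi_1(\mathcal{R}(f)) \cong \mathbb{F}_r$, the number $r$ is exactly the number of loops in $\mathcal{R}(f)$, and $H_1(\mathcal{R}(f); G) \cong G^r$.

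For the first item, I would simply invoke Corollary~\ref{cor:main application}(i): since $\pi_1(S^n)=0$ for $n\geqslant 2$, $\pi_1(\mathbb{C}P^n)=0$, and $\pi_1(\mathbb{R}P^n)=\mathbb{Z}/2$ are all finite groups, $\mathcal{R}(f)$ is a tree in each case. For the second item, $\pi_1(\mathbb{T}^n)\cong \mathbb{Z}^n$ is abelian (in fact amenable), so Corollary~\ref{cor:main application}(ii) applies, giving $r\leqslant 1$; hence $\mathcal{R}(f)$ is either a tree or a graph with a single loop, and in the latter case it is homotopy equivalent to $S^1$.

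The remaining two items require passing to homology, since for $g\geqslant 2$ the surface group $\pi_1(M_g)$ contains free subgroups of arbitrary rank, so a subgroup argument on $\pi_1$ alone is insufficient. For the orientable closed surface $M_g$ the first homology is $H_1(M_g;\mathbb{Z}) \cong \mathbb{Z}^{2g}$, while $H_1(\Gamma(f);\mathbb{Z}) \cong \mathbb{Z}^r$. Since $\iota_*$ is injective, I conclude $r\leqslant 2g$. For the non-orientable closed surface of genus $g$, working over $\mathbb{Z}$ would yield only $H_1 \cong \mathbb{Z}^{g-1}\oplus\mathbb{Z}/2$, so instead I would work with $\mathbb{Z}/2$ coefficients, where $H_1(M_g;\mathbb{Z}/2)\cong (\mathbb{Z}/2)^g$ and $H_1(\Gamma(f);\mathbb{Z}/2)\cong (\mathbb{Z}/2)^r$; injectivity of $\iota_*$ then yields $r\leqslant g$.

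I do not expect any genuine obstacle: all the heavy lifting has already been done in Proposition~\ref{composition is identity} and Corollary~\ref{cor:main application}. The only minor point that requires care is the choice of coefficient group in the non-orientable case, where $\mathbb{Z}$-coefficients give a weaker bound (because of the $\mathbb{Z}/2$ torsion summand in $H_1(M_g;\mathbb{Z})$) than the sharp bound obtained with $\mathbb{Z}/2$-coefficients.
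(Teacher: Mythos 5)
Your proof is correct and follows the paper's argument essentially verbatim: the first two items are read off from Corollary \ref{cor:main application}, and the surface cases from injectivity of the map induced on first (co)homology via Proposition \ref{composition is identity} — indeed the paper is terser than you are on the non-orientable case, where your explicit choice of $\mathbb{Z}/2$ coefficients is a welcome clarification. One side remark is backwards, though: over $\mathbb{Z}$ the injection $\mathbb{Z}^r\hookrightarrow H_1(M_g;\mathbb{Z})\cong\mathbb{Z}^{g-1}\oplus\mathbb{Z}/2$ forces $r\leqslant g-1$ (a torsion-free subgroup must embed into the free part), which is a \emph{stronger} bound than the stated $r\leqslant g$, not a weaker one; this does not affect the validity of your $\mathbb{Z}/2$-coefficient argument for the bound as stated.
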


\begin{proof} 
The first two points are a trivial application of Corollary \ref{cor:main 
application}. For the third point observe that the composition 
\[\mathbb{Z}^r \xrightarrow{\iota_*}\mathbb{Z}^{2g } 
\xrightarrow{\pi_*}\mathbb{Z}^r\] 
is an isomorphism, thus we have $r\leqslant 2g$. The same holds for the 
non-orientable case with $2g$ replaced by $g$.
\end{proof}

In the case when $f\colon  M_g  \to \mathbb{R}$ is a Morse function the number 
of loops in $\mathcal{R}(f) $ is equal to $g$ (see \cite[Lemma A]{Cole-Mc}).

Now we strengthen our Proposition \ref{cor:reeb examples} showing
that for any $C^1$\nobreakdash-function $f$ on a closed orientable surface 
$M_g$ the number of loops in $\mathcal{R}(f) $ is lower or equal to the genus 
$g$ of $M$. We start with the following observation.

\begin{lemma}\label{connectness after removing}
Let $[p]\in \mathcal{R}(f) $ be a point which is in the interior of an edge, 
such that $\mathcal{R}(f) \setminus \{[p]\}$ is path-connected. Then 
$M\setminus \pi^{-1}([p])$ is path-connected.
\end{lemma}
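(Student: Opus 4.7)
The plan is to lift a given path in $\mathcal{R}(f)\setminus\{[p]\}$ to a path in $M\setminus\pi^{-1}([p])$, stitching together segments chosen inside type (IIa) components with short connectors inside essential components at vertices. Since $[p]$ lies in the interior of an edge $J_0$ of $\mathcal{R}(f)$, Proposition \ref{prop:compIIa_is_edge} identifies $J_0$ with a single type (IIa) component $C_0\subset M^{(c,c')}$, for consecutive critical values $c<c'$ bracketing $d:=\widetilde f([p])$; the fibre $\pi^{-1}([p])$ is the single connected component $M_d\cap C_0$ of the regular level $M_d$. By Proposition \ref{prop:gradient_product_diffeo} we have $C_0\cong (M_d\cap C_0)\times (c,c')$, so removing $\pi^{-1}([p])$ cuts $C_0$ into exactly two open pieces $C_0^-$ and $C_0^+$ whose $\pi$-images are the two components of $J_0\setminus\{[p]\}$; both $\cl(C_0^-)$ and $\cl(C_0^+)$ are path-connected and disjoint from $\pi^{-1}([p])$.

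Given $x_1,x_2\in M\setminus\pi^{-1}([p])$ and a path $\bar\gamma\colon I\to\mathcal{R}(f)\setminus\{[p]\}$ from $\pi(x_1)$ to $\pi(x_2)$, I subdivide $I=[t_0,t_1]\cup\cdots\cup[t_{n-1},t_n]$ so that each image $\bar\gamma([t_i,t_{i+1}])$ is contained in a single closed edge $J_{a_i}$, with interior breakpoints $\bar\gamma(t_i)$ at vertices of $\mathcal{R}(f)$. For each $i$ I choose a path $\gamma_i\colon [t_i,t_{i+1}]\to\cl(C_{a_i})$ with prescribed endpoints, which is possible because $\cl(C_{a_i})$ is path-connected: its interior is a product cylinder by Proposition \ref{prop:gradient_product_diffeo} and its bounding essential components are path-connected by Proposition \ref{prop:connected-componets}. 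At every interior breakpoint the fibre $\pi^{-1}(\bar\gamma(t_i))$ is a path-connected essential component (again by Proposition \ref{prop:connected-componets}), so $\gamma_{i-1}(t_i)$ and $\gamma_i(t_i)$ can be joined inside it by a short connector; similar connectors inside $\pi^{-1}(\pi(x_j))$ handle the degenerate case in which $\pi(x_j)$ itself lies at a vertex, ensuring the ends of the concatenation reach $x_1$ and $x_2$.

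The one delicate step is an index $i$ with $J_{a_i}=J_0$: continuity of $\bar\gamma$ together with $[p]\notin\bar\gamma(I)$ forces $\bar\gamma([t_i,t_{i+1}])$ to lie in a single connected component of $J_0\setminus\{[p]\}$, and I accordingly take $\gamma_i$ inside the matching piece $\cl(C_0^-)$ or $\cl(C_0^+)$, which keeps the segment off $\pi^{-1}([p])$. All vertex-connectors live in essential components of critical levels and are automatically disjoint from $\pi^{-1}([p])$, which sits over a regular interior point of $J_0$. The main obstacle is therefore precisely this observation — that a connected image in $J_0\setminus\{[p]\}$ cannot straddle $[p]$ — and with it in hand the concatenation of the $\gamma_i$'s and the connectors yields a continuous path from $x_1$ to $x_2$ entirely within $M\setminus\pi^{-1}([p])$, establishing its path-connectedness.
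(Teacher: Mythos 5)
Your strategy is sound but genuinely different from the paper's. The paper does not lift the path edge by edge: it invokes the subcomplex $\Gamma(f)\subset M$ of Definition \ref{def:graph_Gamma}, observes that $\pi$ restricted to $\Gamma(f)$ collapses only the spanning trees $\Gamma^{es}_c$ and is a homeomorphism elsewhere, so the homotopy equivalence $\Gamma(f)\to\mathcal{R}(f)$ restricts to one between $\Gamma(f)\setminus\pi^{-1}([p])$ and $\mathcal{R}(f)\setminus\{[p]\}$; hence $\Gamma(f)\setminus\pi^{-1}([p])$ is path-connected, and an arbitrary point of $M\setminus\pi^{-1}([p])$ is joined to $\Gamma(f)$ by a path inside its own level-set component via Proposition \ref{prop:connected-componets}. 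Your argument re-derives by hand, for this one situation, exactly the data that $\Gamma(f)$ packages once and for all (a decreasing path through each type (IIa) component, trees in the essential components); the cost is length, the gain is independence from the construction of $\Gamma(f)$.

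Three points need tightening before your version is watertight. First, and most concretely, the assertion that $\cl(C_0^-)$ and $\cl(C_0^+)$ are \emph{disjoint from} $\pi^{-1}([p])$ is false: the closure in $M$ of either half-cylinder $(M_d\cap C_0)\times(c,d)$ or $(M_d\cap C_0)\times(d,c')$ contains the removed slice $M_d\cap C_0=\pi^{-1}([p])$. You should instead use the one-sided partial closures $C_0^-\cup\big(\cl(C_0)\cap M_c\big)$ and $C_0^+\cup\big(\cl(C_0)\cap M_{c'}\big)$, which are path-connected (product structure plus the limit argument of Proposition \ref{prop:connected-componets}) and genuinely avoid $\pi^{-1}([p])$. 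Second, an arbitrary continuous path in a graph need not admit a finite subdivision into segments each contained in one closed edge (it can oscillate across a vertex infinitely often); replace $\bar\gamma$ at the outset by a combinatorial edge-path, which is legitimate because for a finite graph path-connectedness coincides with combinatorial connectedness. Third, if $J_0$ happens to be a loop edge, then $J_0\setminus\{[p]\}$ is connected as a subspace of $\mathcal{R}(f)$ even though the interior of $J_0$ minus $[p]$ is not, so ``lies in a single component of $J_0\setminus\{[p]\}$'' does not by itself confine the segment to one side of $[p]$; subdivide further at the times the segment returns to the vertex. With these repairs the concatenation argument goes through.
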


\begin{proof}
Let $\Gamma(f)\subset M$, be the one-dimensional complex as in Definition 
\ref{def:graph_Gamma}. Recall that the map 
$\pi\raisebox{-1pt}{\big{|}}_{\Gamma}\colon\Gamma(f) \to \mathcal{R}(f) $ 
contracts 
`spanning trees' in essential components and is a homeomorphism elsewhere (see 
Proposition \ref{prop:Reeb of Gamma equal to Reeb}). Therefore the homotopy 
equivalence $\Gamma(f)\to \mathcal{R}(f)$ restricts to a homotopy equivalence
\[\Gamma(f)\setminus \pi^{-1}([p])\to\mathcal{R}(f) \setminus \{[p]\}.\]
Since $\mathcal{R}(f) \setminus \{[p]\}$ is path-connected, $\Gamma(f)\setminus 
\pi^{-1}([p])$ is path-connected.

Take $x,y\in M\setminus \pi^{-1}([p])$.
Let $x',y'\in \Gamma(f)\subset M$ be two points such that $\pi(x)=\pi (x')$ and 
$\pi(y)=\pi (y')$. By Proposition \ref{prop:connected-componets} points $x$ and 
$x'$ ($y$ and $y'$) can be joined by a path $\alpha$ ($\beta$, respectively) in 
the level set of $x$ (of $y$, respectively).
Take a path $\gamma$ in $\Gamma(f)\setminus \pi^{-1}([p])$ connecting
$x'$ and $y'$. Then $\alpha\ast\gamma\ast\beta^{-1}$ is a path in  
$M\setminus \pi^{-1}([p])$ joining $x$ and $y$.
\end{proof}

Repeating the argument above several times we obtain the following corollary.

\begin{corollary}\label{cor:connectneess after several removing}
Let $[p_1],\ldots,[p_r]\in \mathcal{R}(f) $ be points which are in the interior 
of 
edges. If $\mathcal{R}(f) \setminus \big\{[p_1],\ldots,[p_r]\big\}$ is 
connected, 
then $M\setminus \pi^{-1}\big(\{[p_1],\ldots ,p_r]\}\big) $ is 
connected.
\end{corollary}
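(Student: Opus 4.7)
The plan is to repeat the two-step argument of Lemma \ref{connectness after removing}, applied to the whole set $\{[p_1], \ldots, [p_r]\}$ at once, rather than iterating the lemma (which would require choosing a removal ordering that preserves connectivity of $\mathcal{R}(f)$ at each intermediate stage).

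In the first step I would establish that $\Gamma(f) \setminus \pi^{-1}(\{[p_1], \ldots, [p_r]\})$ is path-connected. The map $\pi\big|_{\Gamma}\colon \Gamma(f) \to \mathcal{R}(f)$ contracts each spanning tree $\Gamma^{es}_c$ (a contractible space) and is a homeomorphism elsewhere, so it is a homotopy equivalence by Proposition \ref{prop:Reeb of Gamma equal to Reeb}. Since each $[p_i]$ lies in the interior of an edge, its preimage in $\Gamma(f)$ is a single interior point of the corresponding edge-path $\gamma_{C_i}$. The restricted map
\[
\Gamma(f) \setminus \pi^{-1}\big(\{[p_1],\ldots,[p_r]\}\big) \longrightarrow \mathcal{R}(f) \setminus \{[p_1],\ldots,[p_r]\}
\]
retains the same fibre structure (the spanning trees are untouched, and the rest is still a homeomorphism), hence remains a homotopy equivalence. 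By hypothesis the target is connected, so the source is path-connected.

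In the second step I would join arbitrary $x, y \in M \setminus \pi^{-1}(\{[p_i]\})$ to points $x', y' \in \Gamma(f)$ satisfying $\pi(x') = \pi(x)$ and $\pi(y') = \pi(y)$, which is possible by surjectivity of $\pi\big|_{\Gamma}$. The level-set components $\pi^{-1}(\pi(x))$ and $\pi^{-1}(\pi(y))$ are path-connected by Proposition \ref{prop:connected-componets}(1), providing paths $\alpha$ from $x$ to $x'$ and $\beta$ from $y$ to $y'$ inside them. Since $\pi(x), \pi(y) \notin \{[p_i]\}$, these components are disjoint from $\pi^{-1}(\{[p_i]\})$, so $\alpha$ and $\beta$ avoid the removed set. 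Combined with a path $\gamma$ in $\Gamma(f) \setminus \pi^{-1}(\{[p_i]\})$ from $x'$ to $y'$ furnished by step one, the concatenation $\alpha \cdot \gamma \cdot \beta^{-1}$ is a path in $M \setminus \pi^{-1}(\{[p_i]\})$ joining $x$ and $y$.

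The main obstacle is step one, where one must confirm that removing the $r$ preimage points does not destroy the homotopy equivalence. This reduces to the observation that each $\pi\big|_{\Gamma}^{-1}([p_i])$ is a single point (rather than a spanning tree), which is guaranteed by the hypothesis that $[p_i]$ lies in the interior of an edge of $\mathcal{R}(f)$; the spanning trees over the remaining vertices are untouched, and the same deformation-retraction onto them still exists.
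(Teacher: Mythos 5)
Your proof is correct and is essentially the paper's own argument: the paper disposes of this corollary with the single line ``Repeating the argument above several times,'' i.e.\ by rerunning the two-step proof of Lemma \ref{connectness after removing}, which is exactly what you do, simply removing all $r$ points in one pass. Your observation that each $\pi^{-1}([p_i])\cap\Gamma(f)$ is a single interior point of $\gamma_{C_i}$ (so the fibre structure, and hence the homotopy equivalence, survives the deletion) is the right justification and matches the intent of the paper's proof.
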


\begin{theorem}\label{estimation of loops by genus}
If $M_g $ is a closed orientable surface of genus $g$, then the number of 
loops in $\mathcal{R}(f) $ is less than or equal to $g$.
\end{theorem}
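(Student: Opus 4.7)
The plan is to combine Corollary \ref{cor:connectneess after several removing} with the classical fact that cutting a closed orientable surface along disjoint simple closed curves whose complement is connected forces the number of curves to be at most $g$.

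Let $r$ denote the first Betti number of $\mathcal{R}(f)$, i.e.\ the number of loops. First I would choose a spanning tree $T$ of $\mathcal{R}(f)$; then $\mathcal{R}(f)\setminus T$ consists of exactly $r$ open edges. Pick one interior point $[p_i]$ in each of these edges, obtaining $[p_1],\dots,[p_r]\in \mathcal{R}(f)$. The complement $\mathcal{R}(f)\setminus\{[p_1],\dots,[p_r]\}$ deformation retracts onto $T$, hence is connected. By Corollary \ref{cor:connectneess after several removing}, the set $M_g\setminus \pi^{-1}(\{[p_1],\dots,[p_r]\})$ is connected.

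Next I would identify each preimage. Since $[p_i]$ lies in the interior of an edge of $\mathcal{R}(f)$, the value $\widetilde f([p_i])$ is a regular value of $f$, and $\pi^{-1}([p_i])$ is by construction a single connected component of the corresponding level set. By the $C^1$ implicit function theorem this level set is a compact $1$\nobreakdash-manifold without boundary in the surface $M_g$, so the connected component $c_i:=\pi^{-1}([p_i])$ is a simple closed curve. The curves $c_1,\dots,c_r$ are pairwise disjoint because the points $[p_i]$ are distinct.

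To finish, cut $M_g$ along $c_1\cup\cdots\cup c_r$ to obtain a compact surface $\widetilde M$ with $2r$ boundary circles. Cutting along a $1$-dimensional submanifold preserves Euler characteristic, so $\chi(\widetilde M)=\chi(M_g)=2-2g$. Since the complement of the $c_i$ in $M_g$ is connected, $\widetilde M$ is connected as well, and writing $\chi(\widetilde M)=2-2g'-2r$ for its genus $g'\geqslant 0$ gives $g'=g-r\geqslant 0$, that is, $r\leqslant g$.

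The routine verifications (regular level sets of $C^1$ functions are $C^1$ submanifolds; Euler characteristic is preserved under cutting along a subcomplex; connectedness of the complement translates to connectedness of the cut surface) are the only technical points, and none is a serious obstacle. The conceptual step is the reduction from a graph-theoretic loop-count in $\mathcal{R}(f)$ to a homological bound on disjoint non-separating curves in $M_g$, which is exactly what Corollary \ref{cor:connectneess after several removing} supplies.
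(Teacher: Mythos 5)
Your proposal is correct and follows essentially the same route as the paper: pick one interior point on each of the $r$ independent loop edges, use Corollary \ref{cor:connectneess after several removing} to conclude that $M_g$ minus the $r$ disjoint level-set circles is connected, and deduce $r\leqslant g$. You merely make two steps more explicit than the paper does --- choosing the points via a spanning tree (the paper just says each $p_i$ lies on an edge in a unique loop) and proving the final genus bound by the Euler-characteristic computation for the cut surface (the paper asserts it as a classical fact).
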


\begin{proof}
Let  $L\subset \mathcal{R}(f)$ be a loop in the Reeb graph and $J\subset L$ an 
edge. 
If we remove from $\mathcal{R}(f)$ a point $[p]\in \int(J)$, then 
$\mathcal{R}(f)\setminus\{[p]\}$ is connected. Suppose now that $\mathcal{R}(f) 
$ has $r$ loops. 
Then 
\[\mathcal{R}(f)\setminus \big\{[p_1],\ldots,[p_r]\big\}\] 
is still connected, where 
each $p_i$ belongs to an edge in a unique loop. By Lemma \ref{cor:connectneess 
after several removing} \[M\setminus \pi^{-1}\big(\{[p_1],\ldots,[p_r]\}\big)\] 
is connected.

When $M_g$ is a compact surface, a connected component of the preimage of a 
regular point is a connected manifold of co-dimension $1$, i.e. is 
homeomorphic to $S^1$. Thus after removing $r$ (disjoint) circles 
$M$ is still a connected space. This is possible only if $M_g$ has genus at 
least $r$.
\end{proof}

The above theorem supplies a criteria to recognize a function which is not 
Morse. 

On the other hand, most manifolds admit smooth functions with very few 
critical points. It is well known that on every oriented closed surface $M_g $ 
for $g \geqslant 1$ there exists a $C^1$\nobreakdash-function $f\colon  M_g  
\to \mathbb{R}$ with only three critical points (see \cite[p. 90-91]{Seifert}). 
There 
are also manifolds of higher dimensions which admit $C^1$\nobreakdash-functions 
with only three critical points. An example of such function, is the real or 
complex projective space of (projective) dimension $3$ (see \cite{Milnor} for 
more information).

\begin{theorem}\label{Reeb for three critical}
Let $f\colon  M \to \mathbb{R}$ be a $C^1$\nobreakdash-function with only three 
critical points on a closed manifold. Then the Reeb graph $\mathcal{R}(f) $ is a 
tree 
with two edges.
\end{theorem}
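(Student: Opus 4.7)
My plan is to determine the possible configurations of the three critical points and verify in each case that $\mathcal{R}(f)$ is a two-edge path. Since $M$ is compact, $f$ attains global extrema $c_{\min} < c_{\max}$ at critical points. A point at level $c_{\min}$ (respectively $c_{\max}$) is a global, hence local, extremum, and therefore a critical point; the level sets $M_{c_{\min}}$ and $M_{c_{\max}}$ are thus finite discrete sets of isolated critical points. Since $f$ has exactly three critical points and at least one extremum of each type, they fall into one of three configurations: (a) one minimum at $c_{\min}$, one maximum at $c_{\max}$, and a third critical point $p_\ast$ at an intermediate level $c_\ast \in (c_{\min}, c_{\max})$; (b) two minima at $c_{\min}$ and one maximum at $c_{\max}$; or (c) one minimum at $c_{\min}$ and two maxima at $c_{\max}$.

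Configurations (b) and (c) are impossible. In (b), the slab $M^{(c_{\min}, c_{\max})}$ contains no critical points, so the gradient-flow argument of Proposition \ref{prop:gradient_product_diffeo} forces the sublevel set $M^d$ to have constant diffeomorphism type for $d \in (c_{\min}, c_{\max})$. But for $d$ just above $c_{\min}$, $M^d$ is the disjoint union of two small contractible neighbourhoods of the two minima (two components), whereas for $d$ just below $c_{\max}$, $M^d$ equals $M$ minus a small neighbourhood of the unique maximum, which is connected since $M$ is a connected manifold of dimension $\geqslant 2$. This contradiction rules out (b), and (c) is symmetric.

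In case (a), the three essential components are $v_{\min}=\{p_{\min}\}$, $v_\ast$ (the component of $M_{c_\ast}$ containing $p_\ast$), and $v_{\max}=\{p_{\max}\}$, so $\mathcal{R}(f)$ has exactly three vertices. I next verify that $p_\ast$ is neither a local minimum nor a local maximum. If $p_\ast$ were a local minimum, then $p_\ast$ would be isolated in $M_{c_\ast}$ (otherwise a nearby point would also be a local, hence critical, minimum at the same level, a fourth critical point), so $v_\ast = \{p_\ast\}$; the neighbourhood of $p_\ast$ on which $f \geqslant c_\ast$ would then force $p_\ast \notin \cl(M^{(c_{\min}, c_\ast)})$, and any component $C$ of the slab would satisfy $\cl(C) \cap M_{c_\ast}^{es} = \varnothing$, failing to be of type (IIa). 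Consequently $v_{\min}$ would be incident to no edges of $\mathcal{R}(f)$, contradicting connectedness inherited from $M$. The symmetric argument rules out $p_\ast$ being a local maximum.

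It remains to count edges. For the lower slab, Theorem \ref{thm:gradient flow} combined with a classical argument around an isolated $C^1$ minimum shows that every point of a sufficiently small sublevel set flows along $-\nabla f$ inside the same sublevel down to $p_{\min}$, so $M^d$ is a connected neighbourhood of $p_{\min}$ for $d$ just above $c_{\min}$. Constancy of diffeomorphism type keeps $M^d$ connected for all $d \in (c_{\min}, c_\ast)$, and removing the single point $p_{\min}$ from this connected open set of dimension $\geqslant 2$ leaves $M^{(c_{\min}, c_\ast)}$ connected, yielding exactly one type (IIa) component and hence one edge between $v_{\min}$ and $v_\ast$. The symmetric superlevel argument around $p_{\max}$ produces one edge between $v_\ast$ and $v_{\max}$. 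Therefore $\mathcal{R}(f)$ has three vertices and exactly two edges, realising the path $v_{\min}, v_\ast, v_{\max}$, a tree with two edges. The principal technical hurdle is the local-connectedness statement around a $C^1$ isolated extremum, where Theorem \ref{thm:gradient flow} is essential.
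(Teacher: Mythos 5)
Your proof is correct in substance but follows a genuinely different route from the paper's. The paper's argument is much shorter: it invokes Lemma \ref{local maximum} (a local extremum is a degree-one vertex, proved via connectedness of $\mathbb{R}^n\setminus\{0\}$ for $n\geqslant 2$) to obtain a \emph{unique} type (IIa) component $C^+$ (resp.\ $C^-$) adjacent to the maximum (resp.\ minimum), and then uses connectedness of $M$ to force the third critical point $q$ into $\cl(C^+)\cap\cl(C^-)$, which immediately yields three vertices and two edges. You instead never use Lemma \ref{local maximum}: you classify the possible distributions of the three critical points among extremal and intermediate levels, eliminate the two-minima and two-maxima configurations by comparing the number of components of $M^d$ near the bottom and near the top of a critical-point-free slab, and then count edges by showing each of the two slabs $M^{(c_{\min},c_\ast)}$ and $M^{(c_\ast,c_{\max})}$ is connected. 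Your approach has a real advantage: it explicitly disposes of the configurations with a repeated global extremum, which the paper's proof silently assumes away (``the maximum, the minimum, and third critical point''); in the paper's framework those cases also die quickly, since three degree-one vertices cannot form a connected graph on three vertices. The cost is that you lean on the constancy of the diffeomorphism type of sublevel sets over intervals of regular values, which the paper only establishes for level sets and slabs (Proposition \ref{prop:gradient_product_diffeo}), and two of your connectedness claims are stated more casually than they deserve. First, ``$M$ minus a small neighbourhood of the unique maximum is connected since $\dim M\geqslant 2$'' is false for an arbitrary open neighbourhood; it is true here only because every point of $B\cap M^d$ can be flowed along $-\nabla f$ down and out of the ball $B$ to a minimum, so the same flow argument you spell out at $p_{\min}$ is needed at $p_{\max}$. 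Second, $M^d$ is closed rather than open and $M^d\setminus\{p_{\min}\}$ is not literally $M^{(c_{\min},c_\ast)}$; one should pass to $\{f<c_\ast\}\setminus\{p_{\min}\}$ or take the increasing union over $d\uparrow c_\ast$. Both repairs are routine and at the same level of rigour as the paper itself, so I regard the proof as sound.
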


We begin with the following lemma.

\begin{lemma}\label{local maximum}
Let $p\in M$ be a local extremum of a $C^1$\nobreakdash-function  
$f\colon M\to \mathbb{R}$ with finite number of critical points.
Then $[p]\in \mathcal{R}(f) $ is a vertex of degree $1$.
\end{lemma}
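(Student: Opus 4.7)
Let $c = f(p)$. Without loss of generality $p$ is a local maximum; the local minimum case follows by applying the result to $-f$, which has the same underlying Reeb graph as $f$ (only edge orientations are reversed).

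My first step will be to observe that $p$ is a strict local maximum and that the essential component $M_c^{es}$ containing $p$ equals $\{p\}$. Since the critical points of $f$ are isolated and $p$ is a local maximum, I choose a small open ball $V$ about $p$ that meets $\textup{Cr}(f)$ only at $p$ and satisfies $f \leqslant c$ on $V$. Any $y \in V \setminus \{p\}$ is regular, so by the submersion theorem $f$ attains values strictly greater than $c$ in every neighbourhood of $y$; if $f(y) = c$, this contradicts $f \leqslant c$ on $V$. Hence $f < c$ on $V \setminus \{p\}$, which makes $\{p\} = V \cap M_c$ open in $M_c$. As $\{p\}$ is also closed in $M_c$, it is a connected component of $M_c$, so $M_c^{es} = \{p\}$.

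Next I will count the edges at $[p]$, which by Proposition \ref{prop:compIIa_is_edge} equals the number of type (IIa) components $C$ of $M$ with $p \in \cl(C)$. No such $C$ can lie in $M^{(c,c'')}$ for any $c'' > c$: points of such $C$ satisfy $f > c$ while $V$ has $f \leqslant c$, so $V \cap C = \varnothing$, and since $V$ is an open neighbourhood of $p$ this forces $p \notin \cl(C)$. Let $c'$ be the largest critical value strictly below $c$, and shrink $V$ so that $V \subset f^{-1}((c',c])$. Then $V \setminus \{p\}$ is a nonempty connected open subset of $M^{(c',c)}$ (connectedness uses $\dim M \geqslant 2$), so it lies in a single connected component $C$ of $M^{(c',c)}$. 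Any other component $C'$ of $M^{(c',c)}$ with $p \in \cl(C')$ contains a sequence converging to $p$ which eventually enters $V \setminus \{p\} \subset C$; since components are pairwise disjoint, this forces $C = C'$. Hence at most one type (IIa) component is incident to $[p]$.

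To promote this unique component from type (II) to type (IIa), I will use the connectedness of $\mathcal{R}(f) = \pi(M)$. Since $M$ is a closed connected manifold of dimension at least two and $f$ has only isolated critical points, $f$ is non-constant, so its global maximum and minimum are distinct critical values and $\mathcal{R}(f)$ has more than one vertex. If $[p]$ had degree zero, $\{[p]\}$ would form its own connected component of $\mathcal{R}(f)$, contradicting connectedness. Thus the unique component $C$ found above must be of type (IIa), and $[p]$ has degree exactly one. The subtle step, which I flag as the main obstacle, is this final promotion: a direct attempt to show the unique type (II) component is of type (IIa) via a gradient-flow analysis of $\partial C \cap M_{c'}$ is awkward, because a trajectory may exit $C$ through a regular point of $M_{c'}$ without ever converging to a critical point inside $\cl(C)$; routing through the connectedness of $\mathcal{R}(f)$ circumvents this entirely.
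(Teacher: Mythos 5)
Your proof is correct and rests on the same key fact as the paper's own argument: a punctured ball in a manifold of dimension $\geqslant 2$ is connected, so two distinct components of $M^{(c',c)}$ cannot both accumulate at $p$. You are considerably more careful than the paper --- you verify that the Reeb class of $p$ is the singleton $\{p\}$, rule out incident components from above, and (unlike the paper, which only excludes degree $\geqslant 2$) establish degree $\geqslant 1$ via connectedness of $\mathcal{R}(f)$ --- but the route is essentially the same.
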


\begin{proof}
We assume that $p$ is a local maximum. The proof for a local minimum
is analogous. 

Suppose that $p\in M$ corresponds to a vertex of degree greater or equal than 
$2$ and $f(p)\in (d,c)$. This implies that every sufficiently small 
neighbourhood $U$ of $p$ satisfies
\[U\cap \left(M^{(d,c)}\setminus\{p\}\right)=X\sqcup Y,\]
for some non-empty and disjoint sets $X$ and $Y$.
Let $h\colon B\to \mathbb{R}^n$ be a chart around $p$ taking $p$ to $0$. If we 
take 
$U=B$ for a sufficiently small $B$ (i.e. $B\subset M^{(d,c)}$), the equality 
becomes \[B\setminus\{p\} = X\sqcup Y\] for some (possibly different, but still 
disjoint and non-empty) $X$ and $Y$. Then 
\[\mathbb{R}^n\setminus\{0\}=h(B\setminus\{p\})=h(X\sqcup Y)=h(X)\sqcup h(Y),\]
which is clearly a contradiction since $\mathbb{R}^n\setminus \{0\}$ is 
connected for 
$n\geqslant 2$.
\end{proof}

\begin{proof}[Proof of Theorem \ref{Reeb for three critical}] Let $p^+,p^-,q$ 
denote respectively the maximum, the minimum, and third critical point of 
$f\colon  M\to \mathbb{R}$. Let $f(p^+)= c'$, $f(p^-)= c$ , and $f(q)= d$
for some $c\leq d \leq c'$. By Lemma \ref{local maximum} there 
exists a unique connected component  $C^+$ ($C^-$) of type (IIa) such that $p^+
\subset \cl(C^+)$ ($p^-\subset \cl(C^-)$, respectively). Evidently $q\in 
\cl(C^+)$ and $q\in\cl(C^-)$. Otherwise $q$ would be in an isolated 
component of a connected manifold $M$. Consequently $\mathcal{R}(f) $ 
consists of three vertices $[p^+],[p^-], [q]$, and two edges
$J^+= [C^+]$, and $J^-=[C^-]$ which proves the statement.
\end{proof}

\subsection{Final remarks}
The results of this work show that the Reeb graph of a
function $f\colon M\to \mathbb{R}$  is a not very fine combinatorial
invariant of a the pair $(M,f)$. More information can be extracted from 
$\mathcal{R}(f) $ only if the fundamental group of $M$ contains a free group 
$\mathbb{F}_r$, 
on $r\geq 2$ generators. For example this works well for surfaces.  
This explains why the reverse representation of a surface constructed from the 
Reeb graph is attainable (see \cite{Masumoto,Sharko}). On the other hand in the 
case of surfaces one could expect to get more information by studying the 
embedding of $\Gamma(f)$ in $M$. In particular one can study the
Bollobas-Riordan-Tutte polynomial of $\Gamma(f)\subset M$ and ribbon graphs.

It is a natural question what is the image of $F_r=
\pi_1(\mathcal{R}(f))= \pi_1(\Gamma(f))$ in $\pi_1(M)$. 
A similar question has been answered in terms of horizontal homology by Dey and 
Wang in \cite{Dey}. Furthermore, one can reverse the question and ask whether 
every subgroup of $\pi_1(M)$ isomorphic to $\mathbb{F}_r$ is in the image of 
$\pi_1(\mathcal{R}(f))$ for some function $f$.

Usually we want to simplify the manifold using $\mathcal{R}(f)$, so there is a 
natural tendency to seek for functions with the least possible number of 
critical points. However, when trying to infer some information about $M$ from 
its Reeb graph, we should try to make $\mathcal{R}(f)$ as complicated 
as possible, hence there is a need for construction of complicated functions on 
$M$ (with plenty of critical points and loops). To our knowledge there is no 
algorithm that provides such functions. A recent paper by Benedetti and Lutz 
\cite{Benedetti} shows that even random Morse PL-functions tend to have a very 
simple Morse vectors.

{
\end{document}